\newtheorem{theorem}{Theorem}[section] % 1st argument is your name for it
\newtheorem{lemma}[theorem]{Lemma}     % 2nd argument is what is printed
\newtheorem{corollary}[theorem]{Corollary}
\newtheorem{proposition}[theorem]{Proposition}
\theoremstyle{plain}
\newtheorem{definition}{Definition}
\newcommand{\abs}[1]{\lvert#1\rvert}
\title[Digit sets for connected tiles via similar matrices I]% end with percent
 {Digit sets for connected tiles via similar matrices I: Dilation matrices with rational eigenvalues} % This is the full title of the paper
\author{Avra S.~Laarakker and Eva Curry}
\begin{document}

\begin{abstract}
Given any $m$-dimensional dilation matrix $A$ with rational eigenvalues, we demonstrate the existence of a digit set $D$ such that the attractor $T(A,D)$ of the iterated function system generated by $A$ and $D$ is connected.  We give an easily verified sufficient condition on $A$ for a specific digit set, which we call the centered canonical digit set for $A$, to give rise to a connected attractor $T(A,D)$.
\end{abstract}

\maketitle

\section{Introduction}

A \emph{dilation matrix} is a matrix $A \in M_{m}(\mathbb{Z})$ that is expanding in the sense that all eigenvalues $\lambda$ of $A$ satisfy $\abs{\lambda} > 1$.  Let $A$ be a dilation matrix.  Let $D$ be a complete set of coset representatives of $\mathbb{Z}^m / A(\mathbb{Z}^m)$, with exactly one representative from each coset.  Note that the number of digits is equal to  $\abs{A}$ \cite{gm1992}.  The set $D$ is called a \emph{digit set} (or, more specifically, a \emph{basic digit set}\cite{m1982}) for $A$, and the elements $d \in D$ are called \emph{digits}.  If $F$ is congruent to $\mathbb{R}^m/\mathbb{Z}^m$, then $A(F) \cap \mathbb{Z}^m$ is a (basic) digit set for $A$ \cite{me2006}.  For example, if $F = [0,1)^m$, the standard fundamental domain for the lattice $\mathbb{Z}^m$, then $A(F) \cap \mathbb{Z}^m$ corresponds to the usual base $b$ digit set for $b \geq 2$ a dilation in one dimension.  When $F$ is a translated fundamental domain centered at the origin (which we will refer to as the ``centered fundamental domain''), $\displaystyle{F = \left( -\frac{1}{2}, \frac{1}{2} \right]^m}$, we will call $D_{A} := A(F) \cap \mathbb{Z}^m$ the \emph{centered canonical digit set for $A$}.

Let $A$ be a dilation matrix and $D$ any digit set for $A$.  For each $d \in D$, set 
\[ f_{d}(x) := A^{-1}(x+d), \qquad x \in \mathbb{R}^m. \]
The collection of maps $\{ f_{d}:\ d \in D \}$ is an iterated function system (IFS).  Let $T(A,D)$ be the attractor of the IFS.  Then \cite{gm1992} 
\[ T(A,D) = \{ \sum_{j=1}^{\infty} A^{-j}d_{j}:\ d_{j} \in D \}. \]
Note that $T(A,D)$ is self-affine, that is, 
\[ T(A,D) = \bigcup_{d \in D} A^{-1}(T(A,D) + d) = \bigcup_{d \in D} f_{d}(T(A,D)). \]
It is also known that $T(A,D)$ tiles $\mathbb{R}^m$ by translation either by $\mathbb{Z}^m$ or by a sub-lattice of $\mathbb{Z}^m$ \cite{lw1997}.  In fact, if all singular values $\sigma$ of $A$ satisfy $|\sigma|>2$, then there is a digit set $D$ such that $T(A,D)$ tiles $\mathbb{R}^m$ by translation by the full lattice $\mathbb{Z}^m$ \cite{me2006}.

In this paper, we consider dilation matrices $A$ having only rational eigenvalues, $\lambda \in \mathbb{Q}$.  We demonstrate the existence of a digit set $D^{*}_{A}$ such that $T(A,D^{*}_{A})$ is connected for any such dilation matrix $A$.  The digit set $D^{*}_{A}$ will be derived from the centered canonical digit set for the Jordan form $J$ of the matrix $A$, relying on the fact that $J$ is a similar matrix to $A$.  Our approach thus contrasts with that of Kirat and Lau \cite{kl2000}, who have studied connectedness of sets $T(A,D)$ by generalizing the usual digit sets for base $b$ number systems in one dimension to consecutive colinear digit sets, which are not required to be basic digit sets in general.

Since the question of connectedness for the sets $T(A,D)$ that we consider is of interest to researchers from diverse backgrounds, including in the wavelet and measureable dynamical systems communities as well as computational and algebraic number theory, we include a brief review of the Jordan form.  See \cite{hj1990} for a more complete discussion.
\begin{definition}
Two matrices $A$ and $B$ are \emph{similar} if there exists an invertible matrix $P$ such that $A = PBP^{-1}$.
\end{definition}

\begin{definition}
Let $A$ be any matrix in $M_{m}(\mathbb{C})$.  Let $\lambda_1, \ldots, \lambda_r$ be the eigenvalues of $A$, where each $\lambda_i$ corresponds to a distinct, irreducible eigenspace.
\begin{itemize}
\item For each eigenvalue $\lambda_i$ of $A$, the \emph{Jordan block} corresponding to $\lambda_i$, $J_{i}$, is defined as follows:
  \begin{itemize}
  \item if the generalized eigenspace of $\lambda_{i}$ is one-dimensional, then the Jordan block corresponding to $\lambda_{i}$ is also one-dimensional, with $J_{i} = \lambda_{i}$;
  \item if the generalized eigenspace of $\lambda_{i}$ has dimension $k > 1$, then the Jordan block $J_{i}$ is a \mbox{$k{\times}k$} matrix with $\lambda_{i}$ in every entry on the diagonal, $1$ in every entry on the superdiagonal, and $0$ in every other entry,
\[ J_{i} = \left[ \begin{array}{cccc} \lambda & 1 & & 0 \\ & \ddots & \ddots & \\ & & \ddots & 1 \\ 0 & & & \lambda \end{array} \right]. \]
  \end{itemize}

\item The \emph{Jordan form} of $A$ is the matrix $J := \textrm{diag}{\{J_{i}\}}$.  Note that $J$ is unique up to reordering of the eigenvalues $\lambda_1, \ldots, \lambda_r$.
\end{itemize}
\end{definition}

It can be shown that $A$ and it's Jordan form $J$ are similar matrices.  As well, the matrix $P$ such that $A = PJP^{-1}$ is the matrix whose columns are the eigenvectors and generalized eigenvectors for the eigenvalues of $A$, listed in the appropriate order.  When $A$ has entries in any subset of $\mathbb{C}$, even $\mathbb{Z}$, $\mathbb{Q}$, or $\mathbb{R}$, it may still have irrational or complex eigenvalues, with $J, P, P^{-1} \in M_{m}(\mathbb{C})$ in general.  However, in this paper we consider only matrices with rational eigenvalues.  Thus for our dilation matrices $A$, we will have $J \in M_{m}(\mathbb{Q})$.  It follows that $P, P^{-1} \in M_{m}(\mathbb{Q})$ as well.

\section{Methods}

\subsection{Using the Jordan Form}
Experimental evidence indicates that an attractor $T(A,D)$ for the iterated function system associated with a dilation matrix $A$ and centered canonical digit set $D$ fails to be connected primarily when $A$ includes too large of a skew component\cite{l2009}, \cite{kl2000}.  Thus our basic approach to finding digit sets for which $T(A,D)$ is connected is to consider the Jordan form of $A$, where as much of the skew component as possible has been removed.  We can generate digit sets for $A$ from digit sets for the Jordan form $J$, or any matrix that is similar to $A$, as follows.

Let $A$ be a dilation matrix, let $J$ be any matrix that is similar to $A$ (for example, the Jordan form of $A$), and let $P$ be any matrix that gives a similarity transformation $A = PJP^{-1}$.  Suppose that $D_J$ is a basic digit set for $J$, and write $D_J = \{ g_1, \ldots, g_q\}$, where $q = |\det(J)| = |\det(A)|$.  Since $P$ is an invertible matrix (and thus a continuous linear transformation), $D_A = PD_J$ will be a basic digit set for $A$, with digits $d_i = Pg_i$ for $i = 1, \ldots, q$.  Likewise, the attractors of the corresponding iterated function systems are related:
\begin{align*}
T(A,D_A) &= \sum_{i=1}^{\infty} A^i d_i \quad \mbox{for $d_i \in D_A$}\\
         &= \sum_{i=1}^{\infty} (PJP^{-1})^i d_i \\
         &= \sum_{i=1}^{\infty} PJ^iP^{-1} d_i \\
         &= P\left( \sum_{i=1}^{\infty} J^i(P^{-1}d_i) \right) \\
         &= P(T(J,D_J)).
\end{align*}
Since the mapping defined by $P$ is a continuous homeomorphism between $T(A,D_A)$ and $T(J,D_J)$, we may conclude the following.
\begin{lemma} The set $T(A,D_A)$ is connected if and only if the set $T(J,D_J)$ is connected.
\end{lemma}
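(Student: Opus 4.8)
The plan is to leverage the identity $T(A,D_A) = P(T(J,D_J))$ that has just been established, and to invoke the fact that connectedness is a topological invariant preserved under homeomorphisms. Since $P$ is an invertible matrix, the map $x \mapsto Px$ is a homeomorphism of $\mathbb{R}^m$ onto itself: it is continuous because every linear map on a finite-dimensional space is continuous, and its inverse $x \mapsto P^{-1}x$ is likewise a linear map and hence continuous. Restricting this map to the attractor $T(J,D_J)$ therefore yields a homeomorphism from $T(J,D_J)$ onto its image $T(A,D_A)$.

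First I would record the standard topological fact that the continuous image of a connected set is connected. Applying this to $P$ restricted to $T(J,D_J)$, one direction is immediate: if $T(J,D_J)$ is connected, then so is $P(T(J,D_J)) = T(A,D_A)$. For the converse I would apply the same fact to $P^{-1}$ restricted to $T(A,D_A)$, concluding that if $T(A,D_A)$ is connected then $P^{-1}(T(A,D_A)) = T(J,D_J)$ is connected as well. Together these two implications give the claimed equivalence.

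The hard part, such as it is, has already been carried out above in the computation showing $T(A,D_A) = P(T(J,D_J))$; what remains is essentially a routine topological observation. The only point requiring care is verifying that $P$ supplies a genuine homeomorphism between the two specific sets, rather than merely a continuous surjection, and this follows at once from the invertibility of $P$ together with the continuity of both $P$ and $P^{-1}$. One could note for completeness that both attractors are compact, so the restriction of $P$ is automatically a closed map, but this is not needed for the connectedness conclusion.
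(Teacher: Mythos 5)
Your proposal is correct and follows exactly the paper's approach: the paper also relies on the identity $T(A,D_A) = P(T(J,D_J))$ and the observation that the invertible linear map $P$ is a homeomorphism, so connectedness transfers in both directions. Your write-up simply makes explicit the routine topological details that the paper leaves implicit.
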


While any constant multiple of $P$ will define a similarity transformation
between $A$ and $J$, $P$ is generally taken to have determinant $1$ for the
Jordan decomposition of a matrix $A$.  This means, however, that even if $J$
has integer entries, the entries in $P$ are not necessarily integral, and
thus the new digit set $D_A = PD_J$ will not necessarily be in
$\mathbb{Z}^m$.  For example, consider the matrix $A$ and its Jordan form
$J$,
\[ A = \left[ \begin{array}{cc} 3 & 10 \\ 0 & 3 \end{array} \right], \qquad J
= \left[ \begin{array}{cc} 3 & 1 \\ 0 & 3 \end{array} \right]. \]
Then the matrix $P_1$ with determinant $1$ that gives the similarity
transformation $A = P_1JP_1^{-1}$,
\[ P_1 = \left[ \begin{array}{cc} \sqrt{10} & \sqrt{10} \\ 0 & \frac{1}
{\sqrt{10}} \end{array} \right], \]
is not in $M_2(\mathbb{Z})$, nor is the resulting digit set $P_1D_J$.
However, $P = \sqrt{10} P_1 \in M_2(\mathbb{Z})$ also satisfies $A =
PJP^{-1}$, and will yield a digit set $D_A = PD_J$ that is in $\mathbb{Z}^2$.

To ensure that $D_A \subset \mathbb{Z}^m$, we will consider only matrices $J
\in M_{m}(\mathbb{Z})$ and will take the smallest multiple of $P_1$ such that
$P \in M_{m}(\mathbb{Z})$.  The columns of $P$ are eigenvectors and
generalized eigenvectors of $A$, satisfying $(A - \lambda I)^s\mathbf{v}$ for
some eigenvalue $\lambda$ of $A$ and multiplicity $s$ ($s \geq 2$ when
$\mathbf{v}$ is not itself an eigenvector), so we see that
\[ P = \min \{ rP_1:\ r > 0,\  rP_1 \in M_m(\mathbb{Z}) \} \]
is well-defined.  Notice that the measure of the set $T(A,D_A)$ is $\det{P} =
r^m$ times the measure of the set $T(J,D_J)$, so the new tiles $T(A,D_A)$
that we find from the Jordan form for $A$ will not have measure $1$ in
general.

We conjecture that the centered canonical digit set $D_J$ for $J$ will always
yield a connected attractor $T(J,D_J)$, and thus allow us to find a digit set
$D_A = PD_J$ for $A$ that gives a connected attractor $T(A,D_A)$.  In this
paper, we prove the result in the case that $A$ has rational eigenvalues.

\subsection{Tiling Considerations}
Let $A$ be a dilation matrix and $D$ a digit set for $A$.  By Theorem 1.1 of \cite{lw1997}, there is a lattice $\Gamma$ for which $T(A,D)$ gives a lattice tiling of $\mathbb{R}^m$.  That is, $\Gamma$ is a sublattice of $\mathbb{Z}^m$ satisfying
\begin{enumerate}
\item $T(A,D) \cap (T(A,D) + \gamma)$ has $m$-dimensional Lebesgue measure $0$ for all nonzero $\gamma \in \Gamma$, and
\item $\cup_{\gamma \in \Gamma} (T(A,D) + \gamma) = \mathbb{R}^m$.
\end{enumerate}
We call such a lattice $\Gamma$ a \emph{lattice of translations for $T(A,D)$}.

\begin{lemma}\label{lem_digittranslates}  There exists a lattice of translations $\Gamma_A$ for $T(A,D)$ with $D \subset \Gamma_A$.
\end{lemma}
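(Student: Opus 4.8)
The plan is to exhibit $\Gamma_A$ explicitly as the smallest $A$-invariant sublattice of $\mathbb{Z}^m$ containing $D$, namely the subgroup $\Gamma_A := \langle A^{k}d : k \ge 0,\ d \in D\rangle$ generated by the digits together with all their images under the iterates of $A$. By construction $A\Gamma_A \subseteq \Gamma_A$ and $D \subseteq \Gamma_A$, so the only points needing argument are that $\Gamma_A$ be a genuine full-rank lattice and that $T := T(A,D)$ actually tile by it. Full rank is immediate: since $A$ preserves the subspace $V := \mathrm{span}_{\mathbb{R}}\Gamma_A$ and is invertible there, the radix expansions $\sum_{j\ge 1}A^{-j}d_j$ defining $T$ all lie in $V$, so $T \subseteq V$; as $T$ has positive Lebesgue measure, $V = \mathbb{R}^m$.

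Next I would record the algebraic fact that $D$ is a complete set of coset representatives for $\Gamma_A/A\Gamma_A$. The digits are pairwise distinct modulo $A\mathbb{Z}^m$, hence a fortiori modulo the smaller lattice $A\Gamma_A \subseteq A\mathbb{Z}^m$; since there are exactly $\abs{\det A} = [\Gamma_A : A\Gamma_A]$ of them, they represent every coset precisely once. With this in hand, verifying that $\Gamma_A$ is a lattice of translations splits into the two defining conditions: that $\{T + \gamma\}_{\gamma \in \Gamma_A}$ cover $\mathbb{R}^m$, and that the translates overlap only in measure zero.

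For covering I would iterate the self-affine identity $A T = \bigsqcup_{d \in D}(T + d)$ to obtain, for every $n$, the set equality $A^{n}T = \bigsqcup_{p \in P_n}(T + p)$ with $P_n = \{\sum_{j=0}^{n-1}A^{j}d_j : d_j \in D\}$. Because $D$ represents $\Gamma_A/A\Gamma_A$, unique radix representation gives $\Gamma_A = P_n + A^{n}\Gamma_A$, so $U := \bigcup_{\gamma \in \Gamma_A}(T+\gamma)$ satisfies $U = A^{n}U$ for all $n$, whence $A^{-1}U = U$. Thus $U$ descends to an $\bar A$-invariant subset of the torus $\mathbb{R}^m/\Gamma_A$, where $\bar A$ is the endomorphism induced by $A$; since no eigenvalue of $A$ is a root of unity, $\bar A$ is ergodic for Haar measure, and as $U$ has positive measure it is the whole torus. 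Hence the translates cover $\mathbb{R}^m$ up to a null set.

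The overlap condition is the heart of the matter, and I would attack it through the covering multiplicity $W(x) := \sum_{\gamma \in \Gamma_A}\chi_T(x-\gamma)$. Measure-disjointness of $T = \bigsqcup_d A^{-1}(T+d)$ gives $\chi_T(y) = \sum_{d\in D}\chi_T(Ay - d)$ almost everywhere; substituting this into $W$ and using that $A\gamma + d$ runs through $\Gamma_A$ exactly once as $(\gamma,d)$ ranges over $\Gamma_A \times D$ yields the clean relation $W(x) = W(Ax)$. So $W$ descends to an $\bar A$-invariant function on $\mathbb{R}^m/\Gamma_A$, and ergodicity forces $W \equiv c$ for a constant $c$; as the integral of $W$ over a fundamental domain is $\mu(T) > 0$ and $W$ is a nonnegative integer, $c \ge 1$ and $\mu(T) = c\cdot\mathrm{covol}(\Gamma_A)$. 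The tiling is then equivalent to $c = 1$, and \emph{this is the step I expect to be the main obstacle}: ruling out a globally constant multiplicity $c \ge 2$. I would analyze the finite overlap set $\mathcal{D} := \{\delta \in \Gamma_A : \mu(T \cap (T+\delta)) > 0\}$; the self-affine structure forces $\delta \in \mathcal{D} \Rightarrow A\delta + (e-d) \in \mathcal{D}$ for some $d,e \in D$, so every element of $\mathcal{D}$ lies on a cycle of this expanding recursion, and one argues that the minimality of $\Gamma_A$ (being generated by the digits themselves) admits no nonzero cycle—equivalently, invoking that a digit tile is measure-to-one over the lattice generated by its digits, in the spirit of the self-affine tiling theory of \cite{lw1997}. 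Combined with the covering this gives $c = 1$, so $T$ tiles by $\Gamma_A$ and $D \subseteq \Gamma_A$, as required.
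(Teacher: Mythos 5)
Your argument sets out to prove a strictly stronger statement than the lemma: that $T(A,D)$ tiles $\mathbb{R}^m$ by the \emph{smallest $A$-invariant} lattice containing the digits. The paper's own proof is much lighter: it takes the existence of \emph{some} lattice of translations $\Gamma$ as given by Theorem 1.1 of \cite{lw1997}, applies $A$ to the tiling $\mathbb{R}^m = \Gamma + T$, uses the self-affine identity $AT = \bigcup_{d \in D}(T+d)$ to conclude that $A\Gamma + D$ is again a set of translations tiling $\mathbb{R}^m$, and observes that $D \subset A\Gamma + D$ because $0 \in \Gamma$. No $A$-invariance is claimed or needed there.

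The genuine gap in your proposal is exactly the step you flag, and it is not merely hard --- it is false in general. Your $\Gamma_A$ is $A$-invariant by construction, so if the multiplicity constant were $c=1$ then $T(A,D)$ would admit an $A$-invariant lattice of translations. But, as the paper notes immediately after this lemma, Lagarias and Wang (equations (1.5) of \cite{lw1997}) exhibit a pair $(A,D)$ --- a ``stretched tile'' --- for which \emph{no} lattice of translations can be $A$-invariant. For such a pair your constant $c$ is forced to be at least $2$, so the closing appeal to ``a digit tile is measure-to-one over the lattice generated by its digits'' is precisely the assertion that fails; no cycle-counting or minimality argument can rescue it without restricting the class of digit sets. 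The earlier parts of your argument (full rank of $\Gamma_A$, $D$ being a complete residue system for $\Gamma_A/A\Gamma_A$, the ergodicity arguments showing $U$ is full measure and $W \equiv c$) are sound, but the proof cannot be closed along these lines. A smaller point: you should normalize $0 \in D$ or work with the difference set $D-D$, since overlaps $T \cap (T+\delta)$ are governed by differences of digit strings, not by the digits themselves.
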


\begin{proof}
If $\Gamma = \mathbb{Z}^m$, such as when $A$ yields a radix representation for $\mathbb{Z}^m$ with digit set $D$ \cite{me2006}, then the result is immediate.  In this case, $\Gamma_A$ is the unique lattice of translations for $T(A,D)$.

In general, let $\Gamma$ be any lattice of translations for $T(A,D)$.  Let $\{ b_1, \ldots, b_m \}$ be a lattice basis for $\Gamma$.  Then every $\gamma \in \Gamma$ can be written as a linear combination of the basis elements with integer coefficients, and
\[ \mathbb{R}^m = \bigcup_{c_1, \ldots, c_m \in \mathbb{Z}} \left( c_1 b_1 + \cdots + c_m b_m + T(A,D)\right). \]
Using the self-affine property of $T(A,D)$,
\begin{align*}
\mathbb{R}^m = A\mathbb{R}^m &= \bigcup_{c_1, \ldots, c_m \in \mathbb{Z}} \left( c_1 Ab_1 + \cdots + c_m A b_m + AT(A,D) \right) \\
  &= \bigcup_{c_1, \ldots, c_m \in \mathbb{Z}} \bigcup_{d \in D} \left( c_1 Ab_1 + \cdots + c_m Ab_m + d + T(A,D) \right).
\end{align*}
Thus $\Gamma_A = A\Gamma + D$ is also a lattice of translations for $T(A,D)$.  The lattice $\Gamma$ must contain $0$, therefore by construction $D \in \Gamma_A$.
\end{proof}

Note that $\Gamma = A\Gamma + D$ if and only if $\Gamma$ is $A$-invariant.  This implies that if $T(A,D)$ has a lattice of translations $\Gamma$ that is not $A$-invariant, then $\Gamma$ will not be unique.  Also, if $\Gamma$ is the unique lattice of translations for $T(A,D)$, then $\Gamma = A\Gamma + D$ must be $A$-invariant.   As a partial converse, if $\Gamma$ is $A$-invariant, then $\Gamma_A = \Gamma$ is the unique $A$-invariant lattice of translations for $T(A,D)$.  We cannot determine from the above discussion whether $T(A,D)$ may also have a non-$A$-invariant lattice of translations in this case, however.

A tile $T(A,D)$ need not have an $A$-invariant lattice of translations in general; Lagarias and Wang give an example of a matrix $A$ and digit set $D$ such that no lattice of translations $\Gamma$ for $T(A,D)$ can be $A$-invariant (\cite{lw1997}, equations (1.5)).  Inspection of the proof of their Theorem 1.1 shows that this can only occur for so-called ``stretched tiles'', however~\cite{lw1997}.  We conjecture that centered canonical digit sets do not give rise to stretched tiles.

For the remainder of the paper, we fix a lattice of translations $\Gamma_A$ with $D \subset \Gamma_A$.  In subsequent sections, we will also assume that $\Gamma_A$ is $A$-invariant.  The following lemma holds more generally, however.

\begin{lemma}\label{lem_DcapGamma}  If $D$ is the centered canonical digit set for $A$ and $F$ is the centered fundamental domain for $\Gamma_A$, then $D = AF \cap \Gamma_A$.
\end{lemma}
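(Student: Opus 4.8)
The plan is to prove the two inclusions, using throughout the fact that the invertible linear map $A$ carries a fundamental domain of $\Gamma_A$ to a fundamental domain of $A\Gamma_A$. Write $F_0 = (-\tfrac12,\tfrac12]^m$ for the centered fundamental domain of $\mathbb{Z}^m$, so that by definition $D = AF_0 \cap \mathbb{Z}^m$; in particular $A^{-1}d \in F_0$ for every digit $d \in D$, the number of digits is $\abs{\det A}$, and $D \subset \Gamma_A$ is in force. Since the $\Gamma_A$-translates of $F$ tile $\mathbb{R}^m$ and $A$ is a bijection, the translates $AF + \gamma'$ with $\gamma' \in A\Gamma_A$ tile $\mathbb{R}^m$; that is, $AF$ is a (half-open) fundamental domain for $A\Gamma_A$, meeting each coset of $A\Gamma_A$ in $\mathbb{R}^m$ exactly once.

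First I would show $D \subseteq AF \cap \Gamma_A$. As $D \subset \Gamma_A$ is given, it remains to check $D \subseteq AF$, i.e. $A^{-1}d \in F$ for each $d \in D$. Here I would use that $\Gamma_A$ is a sublattice of $\mathbb{Z}^m$, so its centered fundamental domain $F$ contains the centered cell $F_0$ of the finer lattice $\mathbb{Z}^m$ (a point lying in the centered cell of $\mathbb{Z}^m$ lies a fortiori in the larger centered cell of the coarser lattice $\Gamma_A$). Then $A^{-1}d \in F_0 \subseteq F$, so $d \in AF$ and hence $D \subseteq AF \cap \Gamma_A$. To finish by a cardinality squeeze, assume (as in the sequel) that $\Gamma_A$ is $A$-invariant, so $A\Gamma_A \subseteq \Gamma_A$ and $[\Gamma_A : A\Gamma_A] = \abs{\det A}$. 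Because $AF$ meets each coset of $A\Gamma_A$ exactly once, the set $AF \cap \Gamma_A$ contains at most one representative of each coset of $A\Gamma_A$ in $\Gamma_A$, whence $\abs{AF \cap \Gamma_A} \le [\Gamma_A : A\Gamma_A] = \abs{\det A} = \abs{D}$. Combining this with $D \subseteq AF \cap \Gamma_A$ forces $D = AF \cap \Gamma_A$.

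The main obstacle is the geometric containment $D \subseteq AF$: it rests on the precise meaning of ``centered fundamental domain for $\Gamma_A$'' and on the inclusion $F_0 \subseteq F$, which must be verified from whatever construction of $F$ is adopted (for the centered Voronoi cell it is exactly the statement that a coarser lattice has a larger centered cell). For the general, not-necessarily-$A$-invariant case asserted in the lemma, the counting step above is no longer available, since $A\Gamma_A$ need not sit inside $\Gamma_A$. I would instead realize $F$ as a disjoint union $\bigsqcup_{r \in R}(F_0 + r)$ over a centered transversal $R$ of $\mathbb{Z}^m/\Gamma_A$ with $0 \in R$; this yields $F_0 \subseteq F$ directly, and it transfers the computation back to the already-understood pair $(\mathbb{Z}^m, F_0)$, reducing $AF \cap \Gamma_A = D$ to the lattice identity $\{\,v \in \mathbb{Z}^m : Av \in \Gamma_A\,\} = \Gamma_A$. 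Establishing that identity from the hypothesis $D \subset \Gamma_A$ is the part I expect to require the most care.
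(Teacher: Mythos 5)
Your proof follows essentially the same route as the paper's: the inclusion $D \subseteq AF \cap \Gamma_A$ via $\left(-\tfrac12,\tfrac12\right]^m \subseteq F$ (coarser lattice, larger centered cell) together with $D \subset \Gamma_A$ from Lemma~\ref{lem_digittranslates}, followed by a cardinality squeeze using $\abs{AF \cap \Gamma_A} = \abs{\det A} = \abs{D}$. The only difference is one of care rather than of method: the paper simply asserts that $AF \cap \Gamma_A$ has $\abs{\det A}$ points, whereas you correctly flag that this count rests on $A\Gamma_A \subseteq \Gamma_A$ (the $A$-invariance assumed in the surrounding discussion), and that the general non-invariant case would need a separate argument.
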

\begin{proof}
By definition,
\[ D = A\left( -\frac{1}{2},\frac{1}{2} \right]^{m} \bigcap \mathbb{Z}^m. \]
From Lemma~\ref{lem_digittranslates}, we know that $D \subset \Gamma_A$.  As well, $\Gamma_A \subseteq \mathbb{Z}^m$ implies that $(-1/2,1/2]^m \subseteq F$, so $D \subseteq AF \cap \Gamma_A$.  There are only $q = \abs{\det{A}}$ points in the set $AF \cap \Gamma_A$, however, which is equal to the cardinality of the digit set $D$.  Thus $D = AF \cap \Gamma_A$.
\end{proof}

The translates of $T = T(A,D)$ that are adjacent to the original tile $T$ play an important role in determining whether $T(A,D)$ is connected or disconnected.  We use the following definition from Scheicher and Thuswaldner \cite{st2002}.
\begin{definition}
For $s \in \Gamma_A$, let $B_s = T \cap (T+s)$.  The set of \emph{neighbours} of $T$ is the set
\[ S := \{ s \in \Gamma_A\backslash\{0\} : B_s \neq \emptyset \}. \]
\end{definition}

\subsection{Level Sets and the Iterated Approach to Connectedness}

To show connectedness of an attractor $T(A,D)$, we will take advantage of the iterated function system structure.  Kirat and Lau first proved the following useful result.
\begin{lemma}\cite{kl2000}\label{lem_limitconnected} Suppose that $T_n$ is a sequence of compact, connected subsets of $\mathbb{R}^m$, and that, in the Hausdorff metric, $T = \lim_{n \rightarrow \infty} T_n$.  Then $T$ is connected.
\end{lemma}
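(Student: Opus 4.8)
The plan is to argue by contradiction, showing that a disconnection of the limit set $T$ would, for large $n$, force a disconnection of the approximating sets $T_n$, contradicting their connectedness. First I would observe that $T$ is automatically nonempty and compact: the Hausdorff metric is defined on the space of nonempty compact subsets of $\mathbb{R}^m$, which is complete, so the limit $T$ lies in that space. Thus the only property at stake is connectedness.

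Suppose, for contradiction, that $T$ is disconnected. Then I can write $T = U \cup V$ with $U, V$ nonempty, disjoint, and relatively closed in $T$; since $T$ is compact, $U$ and $V$ are themselves compact. Two disjoint nonempty compact sets have strictly positive separation, so $\delta := \operatorname{dist}(U,V) > 0$ and this infimum is attained. Fix any $\epsilon$ with $0 < \epsilon < \delta/2$, and let $N_\epsilon(S)$ denote the open $\epsilon$-neighbourhood of a set $S$; then $N_\epsilon(U)$ and $N_\epsilon(V)$ are disjoint.

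Next I would invoke the Hausdorff convergence: choose $N$ so that the Hausdorff distance satisfies $d_H(T_n, T) < \epsilon$ for all $n \geq N$. This yields both containments $T_n \subseteq N_\epsilon(T)$ and $T \subseteq N_\epsilon(T_n)$. From the first, together with the elementary identity $N_\epsilon(T) = N_\epsilon(U) \cup N_\epsilon(V)$, I get $T_n \subseteq N_\epsilon(U) \cup N_\epsilon(V)$ for $n \geq N$. From the second, every point of $U$ (respectively of $V$) lies within $\epsilon$ of some point of $T_n$, so $T_n$ meets both $N_\epsilon(U)$ and $N_\epsilon(V)$. Hence for $n \geq N$ the connected set $T_n$ is covered by two disjoint open sets, each of which it meets, which exhibits a disconnection of $T_n$ and contradicts its connectedness. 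Therefore $T$ is connected.

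The only step requiring real care is the separation estimate: the whole argument hinges on converting the topological disconnection of $T$ into a numerical gap $\delta > 0$ and then choosing $\epsilon < \delta/2$ so the two enlarged neighbourhoods stay disjoint. The two directions of the Hausdorff inequality then play complementary roles, with $T_n \subseteq N_\epsilon(T)$ confining $T_n$ to the union of the separated neighbourhoods while $T \subseteq N_\epsilon(T_n)$ forces $T_n$ to reach into each piece. I expect no genuine obstacle beyond verifying that $\operatorname{dist}(U,V)$ is positive and attained, which is immediate from compactness.
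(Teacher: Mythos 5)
Your proof is correct and complete. The paper itself gives no proof of this lemma -- it simply cites Kirat and Lau -- so there is nothing to compare against line by line; your argument (separating a hypothetical disconnection $T = U \cup V$ by a positive distance $\delta$, then using both directions of the Hausdorff estimate to show that for large $n$ the connected set $T_n$ is covered by, and meets, each of the two disjoint $\epsilon$-neighbourhoods) is the standard proof of this fact and all the steps check out.
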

Note that this theorem holds for arbitrary sets $T$ and $T_n$ satisfying the hypotheses.  In our case, we will set $T_0 = [0,1]^m$ and recursively define
\[ T_n := A^{-1}\left( \bigcup_{d \in D} (T_{n-1} + d) \right). \]
We note that $T(A,D) = \lim_{n \rightarrow \infty} T_n$ in the Hausdorff metric \cite{br1993}.  We then see that the tile $T = T(A,D)$ is connected if and only if there exists an $N$ such that $T_n$ is connected for all $n > N$.

Kirat and Lau used Lemma~\ref{lem_limitconnected} to prove a criterion for connectedness, which we present a refinement of.  First, consider a finite subset $B$ of $\mathbb{Z}^m$.  The set $B$ will generate a sublattice of $\mathbb{Z}^m$, which we may consider as a graph.
\begin{definition}
We say that a set $S$ is \emph{$B$-connected} if $S$ forms a connected subgraph of the lattice generated by $B$.  If $\Gamma$ is any sublattice of $\mathbb{Z}^m$, we will also say that the set $S$ is \emph{$\Gamma$-connected} if $S$ forms a connected subgraph of $\Gamma$.
\end{definition}
We may now state the following criterion for connectedness of $T(A,D)$.
\begin{proposition}
Let $S$ be the set of neighbours of $T = T(A,D)$, and $B$ a basis of $\Gamma_A$ such that $B \subset S$.  If $D$ is $B$-connected, then $T$ is connected.
\end{proposition}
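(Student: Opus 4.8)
The plan is to prove that every set $T_n$ in the recursion is connected and then to invoke Lemma~\ref{lem_limitconnected}, since $T = \lim_{n\to\infty} T_n$ in the Hausdorff metric. A useful observation is that the Hausdorff limit of the iterates $T_n = A^{-1}\bigl(\bigcup_{d\in D}(T_{n-1}+d)\bigr)$ is the attractor $T$ \emph{irrespective} of the nonempty compact starting set, because the Hutchinson operator $F(X) = \bigcup_{d\in D} A^{-1}(X+d)$ is a contraction on the space of compact sets (the spectral radius of $A^{-1}$ is less than $1$). I will exploit this freedom and take $T_0 = C$, where $C$ is any closed cube containing $T$; this $C$ is compact and connected, and it makes the whole argument uniform in $n$.

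First I would record that $F$ is monotone and that $F(T)=T$ by self-affinity, so that $C \supseteq T$ gives, by induction, $T_n = F^n(C) \supseteq T$ for every $n$. This containment is the device that converts information about the \emph{neighbours of the limit tile} into overlaps at each finite level. Indeed, $s$ is a neighbour of $T$ precisely when $T \cap (T+s) \neq\emptyset$, i.e.\ when $s \in T - T$; hence for each basis vector $b \in B \subseteq S$ we have $\pm b \in T - T \subseteq T_n - T_n$ (using the symmetry of $T-T$), which says $T_n \cap (T_n \pm b) \neq \emptyset$ for \emph{all} $n$. Here I use Lemma~\ref{lem_digittranslates} only to know $D \subseteq \Gamma_A$, so that $D - D \subseteq \Gamma_A$; no $A$-invariance of $\Gamma_A$ is needed for this step.

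Now I would run the induction on connectedness of $T_n$. The base case $T_0 = C$ is connected. For the inductive step, since $A^{-1}$ is a homeomorphism it suffices to show that $\bigcup_{d\in D}(T_{n-1}+d)$ is connected. Each translate $T_{n-1}+d$ is connected by the inductive hypothesis, so by the standard fact that a finite union of connected sets is connected whenever its intersection graph is connected, it is enough to show that the graph on $D$ with an edge between $d$ and $d'$ whenever $(T_{n-1}+d)\cap(T_{n-1}+d')\neq\emptyset$ is connected. But if $d - d' = \pm b$ for some $b \in B$, then $(T_{n-1}+d)\cap(T_{n-1}+d') \neq\emptyset$ exactly because $\pm b \in T_{n-1}-T_{n-1}$, as established above. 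Thus this intersection graph contains every edge of the $B$-graph on $D$ (the Cayley-type graph whose edges join digits differing by a generator), and since $D$ is $B$-connected, the intersection graph is connected. Hence the union, and therefore $T_n$, is connected; by induction every $T_n$ is connected, and Lemma~\ref{lem_limitconnected} finishes the proof.

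The step I expect to be the main obstacle is precisely the bridge in the second paragraph: neighbour membership $b \in S$ is a statement about the limit tile $T$, whereas the induction needs overlaps of translates of the finite approximants $T_{n-1}$, and Hausdorff convergence alone does \emph{not} transport nonempty intersections down to the approximants. Choosing the initial set $T_0 = C \supseteq T$ (legitimate because the limit is independent of $T_0$) resolves this cleanly by forcing $T \subseteq T_n$ for all $n$; the points requiring care are verifying that this containment together with the symmetry of $T - T$ really does guarantee $\pm b \in T_n - T_n$ at every level, and that the elementary intersection-graph lemma applies to the finitely many connected pieces $T_{n-1}+d$.
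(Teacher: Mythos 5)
Your proof is correct. It shares the paper's overall skeleton --- show by induction that each approximant $T_n$ is compact and connected, then invoke Lemma~\ref{lem_limitconnected} --- but it resolves the one genuinely delicate step by a different mechanism. The paper's proof takes an arbitrary connected $Q$ congruent to $\mathbb{R}^m/\Gamma_A$ and asserts that $B \subset S$ already gives $Q \cap (Q+b) \neq \emptyset$ for each $b \in B$; since neighbourliness is defined relative to the limit tile $T$, not relative to $Q$ or to the approximant $T_{n-1}$, this transfer is precisely the obstacle you identify, and the paper leaves it unjustified. Your device of seeding the iteration with a compact connected $C \supseteq T$ (legitimate because the attractor attracts every nonempty compact initial set), combined with monotonicity of the Hutchinson operator and $F(T) = T$, yields $T \subseteq T_n$ for all $n$ and hence $S \subseteq T - T \subseteq T_n - T_n$; this delivers the overlaps $(T_{n-1}+d) \cap (T_{n-1}+d') \neq \emptyset$ for $d - d' \in B \cup (-B)$ at every finite level, after which the chain lemma for finite unions of connected sets and the $B$-connectedness of $D$ complete the inductive step exactly as in the paper. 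Two details worth stating explicitly if you write this up: the independence of the Hausdorff limit from the initial compact set requires the observation that $A^{-1}$ is a contraction in a norm adapted to its spectral radius (its Euclidean operator norm can exceed $1$ for a general dilation matrix), and Lemma~\ref{lem_limitconnected} is applied to compact sets, which your choice $T_0 = C$ guarantees. The net effect is that your argument supplies a rigorous bridge from the hypothesis $B \subset S$ (a statement about the limit tile) to the finite-level intersections the induction actually needs, which is exactly the point the paper's version glosses over.
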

Our proof follows the basic method of Kirat and Lau.
\begin{proof}
We note that Lemma~\ref{lem_digittranslates} guarantees that $D \subset \Gamma_A$, so that the hypothesis that $D$ is $B$-connected makes sense.

To show that $T$ is connected, we first show that if any subset $Q$ of $\mathbb{R}^m$ that is congruent to $\mathbb{R}^m/\Gamma_A$ is connected, then $A^{-1}(Q+D)$ is connected.

Let $d, d' \in D$.  We want to show that there exists a sequence from $Q+d$ to $Q+d'$.  That is, a sequence $d = d_1, \ldots, d_r = d'$ such that $(Q+d_j) \cap (Q+d_{j+1}) \neq \emptyset$ for $j = 1, \ldots, r-1$.  This means that the path from $Q+d$ to $Q+d'$ is a path through neighbour translates (translates by elements of $S$) of our tile $T$.

We know that for any $d, d' \in D$ there exists a path $d = d_1, \ldots, d_r = d'$ such that $d_{j+1} - d_j \in B$ since $D$ is $B$-connected.  Also, $B \subset S$ implies that $Q \cap (Q+b) \neq \emptyset$ for all $b \in B$.  Thus $Q \cap (Q + d_{j+1} - d_j) \neq \emptyset$, and so $(Q + d_j) \cap (Q + d_{j+1}) \neq \emptyset$.  We see that this same sequence gives the path that we need to connect $Q+d$ and $Q + d'$.  Thus $Q+D$ is connected whenever $Q$ is connected, and therefore $A^{-1}(Q+D)$ is connected.

Let $T_0$ be the standard fundamental domain for $\Gamma_A$, a connected set, and let
\[T_{n+1} = A^{-1}(T+D) \quad \mbox{for $n \geq 0$} \]
as above.  By induction, with $Q = T_n$ for each $n \geq 0$, we see that each $T_n$ is connected.  Then, by Lemma~\ref{lem_limitconnected}, $T = \lim_{n \rightarrow \infty} T_n$ is connected as well.
\end{proof}
As the proposition hints at, even though out goal is to show connectedness of a set in $\mathbb{R}^m$, it is more practical to work in the discrete setting.  Thus instead of the approximating sets $T_n$ defined in the proof above, we wish to consider discrete sets $D_n$, the level sets of the digit set $D$.  Recall that the sets $D_n$ are defined as
\[ D_n = \{ k \in \mathbb{Z}^m: k = \sum_{i=0}^{n-1} A^i d_i,\ \mbox{with $d_i \in D$} \}, \]
and note that the sets can also be defined recursively:
\[ D_n := A(D_{n-1} + D), \quad D_{1} = D. \]
\begin{lemma}\label{lem_TnandDn} Suppose that $D_n \subset \Gamma_A$.  Let $D$ be the centered canonical digit set for $A$.  The set $T_n$ is connected with no finite cut sets if and only if the set $D_n$ is $\Gamma_A$-connected.
\end{lemma}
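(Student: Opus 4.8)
The plan is to translate the topological statement about $T_n$ into a combinatorial statement about a union of lattice translates of a fundamental domain, and then to match topological connectedness ``with no finite cut sets'' against graph connectedness face by face. First I would unfold the recursion defining $T_n$, starting from $T_0 = F$, the closed centered fundamental domain for $\Gamma_A$. Iterating $T_n = A^{-1}\bigl(\bigcup_{d\in D}(T_{n-1}+d)\bigr)$ and collecting the resulting sums $\sum_i A^i d_i$ shows, up to the linear homeomorphism $A^n$, that
\[ A^n T_n = \bigcup_{k \in D_n} (F+k), \]
the translates of $F$ being indexed precisely by the level set $D_n$. Since $A^n$ is a homeomorphism of $\mathbb{R}^m$ it preserves connectedness and preserves the property of having no finite cut set, so it suffices to analyze the set $U_n := \bigcup_{k\in D_n}(F+k)$. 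By hypothesis $D_n\subset\Gamma_A$, and by Lemma~\ref{lem_DcapGamma} the set $F$ is a parallelepiped spanned by a basis of $\Gamma_A$; hence $U_n$ is a union of non-overlapping (up to their boundaries) lattice translates of a single parallelepiped.

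The geometric heart of the lemma is the equivalence, for a finite set $K\subset\Gamma_A$, between ``$\bigcup_{k\in K}(F+k)$ is connected with no finite cut sets'' and ``$K$ is $\Gamma_A$-connected.'' Two translates $F+k$ and $F+k'$ meet, if at all, in a common face of the parallelepiped, and the governing dichotomy is whether that face is a single vertex (a lone point) or has positive dimension (a shared edge, ridge, \ldots, or facet). I would set up the graph on $\Gamma_A$ so that $k$ and $k'$ are adjacent exactly when $(F+k)\cap(F+k')$ is infinite, that is, a positive-dimensional face, and argue that this is the adjacency recorded by $\Gamma_A$-connectedness that is relevant to finite cut sets.

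For the ``if'' direction, assume $D_n$ is $\Gamma_A$-connected. Joining the indices of any two tiles by a path in the graph produces a chain of tiles in which consecutive members share a positive-dimensional face, so $U_n$ is path-connected; moreover each such shared face is a continuum contained in both neighbouring tiles, so deleting any finite set of points still leaves consecutive tiles joined, and propagating this along the chains shows $U_n$ has no finite cut set. For the ``only if'' direction I would prove the contrapositive: if $D_n$ breaks into two nonempty $\Gamma_A$-components $K_1$ and $K_2$, then no tile indexed by $K_1$ shares a positive-dimensional face with a tile indexed by $K_2$, so every contact between $\bigcup_{k\in K_1}(F+k)$ and $\bigcup_{k\in K_2}(F+k)$ occurs at one of finitely many shared vertices; removing this finite vertex set separates the two unions, exhibiting either a disconnection or a finite cut set of $U_n$.

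The step I expect to be the main obstacle is making this dictionary between the lattice relation ``$k$ adjacent to $k'$'' and the geometric relation ``$F+k$ and $F+k'$ share a positive-dimensional face'' both precise and exhaustive, and reconciling it with the notion of $\Gamma_A$-connectedness used earlier. Using the parallelepiped structure from Lemma~\ref{lem_DcapGamma}, one must classify exactly which difference vectors $k-k'$ give facet, lower-dimensional, or single-vertex contacts, and confirm that single-vertex contacts---being finitely many points---are precisely what a finite cut set can delete, while all higher-dimensional contacts are cut-set-robust. This is genuinely delicate when $m\ge 3$, where two tiles can meet along an edge rather than a facet yet remain robustly joined, so facet adjacency alone is too coarse; pinning down that positive-dimensional face contact is the correct adjacency is what drives both implications. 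Once this classification is in hand, the two directions above follow from routine path-in-the-graph and path-in-the-union arguments, and the conclusion about $T_n$ follows by transporting through the homeomorphism $A^n$.
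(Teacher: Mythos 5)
Your proposal follows the same skeleton as the paper's proof: unfold the recursion to get $A^nT_n = F + D_n = \bigcup_{k\in D_n}(F+k)$, observe that the linear homeomorphism $A^n$ preserves both connectedness and the absence of finite cut sets, and then reduce everything to the combinatorial--topological equivalence for the union of translates of $F$. The paper in fact disposes of that last equivalence in a single sentence (``By our definition of $F$, this implies\ldots''), so you have correctly located where the real content lies, and your ``if'' direction (chains of tiles meeting in continua survive deletion of finitely many points) is the right argument for that half.

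The gap is in the ``only if'' direction, and it is exactly the reconciliation you flag but do not resolve. The paper's $\Gamma_A$-connectedness is connectedness in the graph on the lattice generated by a basis $\{b_1,\ldots,b_m\}$, i.e.\ adjacency means $k-k'=\pm b_i$; geometrically this is \emph{facet} contact of the translates of $F$. Your contrapositive asserts that if $D_n$ splits into two $\Gamma_A$-components $K_1, K_2$, then no tile over $K_1$ shares a positive-dimensional face with a tile over $K_2$. That implication fails for $m\ge 3$: take $K_1=\{0\}$ and $K_2=\{b_1+b_2\}$ in $\mathbb{Z}^3$; these are not adjacent in the lattice graph, yet $F$ and $F+b_1+b_2$ share a one-dimensional edge, so the union is connected with no finite cut set while the index set is not $\Gamma_A$-connected. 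Your proposed fix --- redefining adjacency as ``positive-dimensional face contact'' --- makes the equivalence true for that coarser graph, but then you have proved a statement about a different notion of connectedness than the one the paper defines and uses downstream (in Theorem~\ref{thm_goodsuffcond} the $\Gamma_A$-connected paths are built from basis steps). Closing the gap requires either showing that for the level sets $D_n$ of a centered canonical digit set the two graph structures yield the same components, or strengthening the topological hypothesis so that only facet contacts count; neither is supplied, and (to be fair) the paper's own one-line assertion does not supply it either.
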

\begin{proof}
A closer look at the proof of Lemma~\ref{lem_digittranslates} reveals that, for any fixed integer $n \geq 1$, we can in fact find a lattice of translations $\Gamma_{A,n}$ for $T(A,D)$ such that $D_n \subset \Gamma_{A,n}$.  Note that in order to have $D_n \subset \Gamma_A$ for every $n \geq 1$, $\Gamma_A$ needs to be $A$-invariant, however.  The condition that $D_n \subset \Gamma_A$ is required for the hypothesis that $D_n$ is $\Gamma_A$-connected to make sense.

Let $F$ be the centered fundamental domain for the lattice $\Gamma_A$ (that is, a translated fundamental domain for $\Gamma_A$ that is centered at the origin).  Note that $F$ is congruent to $\mathbb{R}^m / \Gamma_A$, and that the Lebesgue measure of $F$ is equal to the Lebesgue measure of $T$.  Suppose that $D$ is $\Gamma_A$-connected.

Let
\begin{align*}
T_0 &= F \\
T_1 &= A^{-1}(T_0 + D) \\
    &= A^{-1}(F+D) \\
T_2 &= A^{-1}(T_1 + D) \\
    &= A^{-1}(A^{-1}(F+D) + D) \\
    &= A^{-2}F + A^{-2}D + A^{-1}D \\
    &\vdots \\
T_n &= A^{-n}F + A^{-n}D + \cdots + A^{-2}D + A^{-1}D.
\end{align*}

Multiplying $T_n$ by $A^n$ we have the following equality
\begin{align*}
A^nT_n &= F + D + \cdots + A^{n-2}D + A^{n-1}D \\
       &= F + D_n.
\end{align*}
By our definition of $F$, this implies that $A^nT_n = F + D_n$ is connected with no finite cut sets (that is, no finite subset of points such that, if we remove those points, the resulting set would be disconnected) if and only if $D_n$ is $\Gamma_A$-connected.  The lemma follows, since $A^nT_n$ is connected (or has a finite cut set) if and only if $T_n$ is connected (respectively, has a finite cut set).
\end{proof}

Lemma~\ref{lem_limitconnected} together with the above lemma yield the following corollary.
\begin{corollary}  Suppose that $\Gamma_A$ is $A$-invariant.  If the level sets $D_n$ are $\Gamma_A$-connected for all sufficiently large $n$, then $T(A,D)$ is connected.
\end{corollary}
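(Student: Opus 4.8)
The plan is to assemble the corollary directly from the two main lemmas of this section, since the substantive work has already been carried out there. First I would observe that the hypothesis that $\Gamma_A$ is $A$-invariant is precisely what guarantees that $D_n \subset \Gamma_A$ for every $n \geq 1$ simultaneously, as noted in the proof of Lemma~\ref{lem_TnandDn}; this is what makes the assumption that each $D_n$ be $\Gamma_A$-connected meaningful. I would then fix an integer $N$ so that $D_n$ is $\Gamma_A$-connected for all $n > N$, which is exactly the ``sufficiently large $n$'' of the hypothesis.

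Next I would invoke Lemma~\ref{lem_TnandDn}. For each $n > N$, the $\Gamma_A$-connectedness of $D_n$ yields, via that lemma, that the approximant $T_n$ built from the centered fundamental domain $F$ for $\Gamma_A$ is connected (in fact connected with no finite cut sets, which is stronger than we need). To feed these sets into the limit lemma I would work with compact approximants, so I would seed the recursion from the closed fundamental domain $\overline{F}$ in place of $F$; passing to closures alters neither the connectedness conclusion of Lemma~\ref{lem_TnandDn} nor the limiting behaviour of the iteration, and each resulting $T_n$ is a finite union of continuous images of a compact set and is therefore itself compact.

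Finally I would appeal to Lemma~\ref{lem_limitconnected}. Since the attractor is the Hausdorff limit $T(A,D) = \lim_{n \to \infty} T_n$ regardless of the nonempty compact set used to initialize the iteration, and since the $T_n$ are compact and connected for every $n > N$, I would discard the finitely many initial terms and apply Lemma~\ref{lem_limitconnected} to the tail of the sequence, concluding that $T(A,D)$ is connected.

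I do not anticipate a serious obstacle, as the corollary is essentially a bookkeeping combination of Lemma~\ref{lem_TnandDn}, which converts $\Gamma_A$-connectedness of the discrete level sets into connectedness of the continuous approximants, and Lemma~\ref{lem_limitconnected}, which transfers connectedness to the Hausdorff limit. The only points requiring any care are the compactness technicality just described and the standard fact that the IFS iterates converge to the attractor independently of the compact starting set; the genuinely load-bearing hypothesis is the $A$-invariance of $\Gamma_A$, which enters solely through the uniform containment $D_n \subset \Gamma_A$.
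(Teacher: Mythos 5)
Your proposal is correct and follows essentially the same route as the paper, which likewise obtains the corollary by combining Lemma~\ref{lem_TnandDn} (converting $\Gamma_A$-connectedness of $D_n$ into connectedness of $T_n$) with Lemma~\ref{lem_limitconnected}, noting that $A$-invariance of $\Gamma_A$ is exactly what ensures $D_n \subset \Gamma_A$ for all $n$. Your additional attention to the compactness of the approximants (replacing the half-open fundamental domain by its closure) is a sensible technical refinement that the paper leaves implicit.
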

The condition that $\Gamma_A$ is $A$-invariant is required for $D_n \subset \Gamma_A$ for every $n \geq 1$.

We can simplify the criterion that the $D_n$ are $\Gamma_A$-connected for all sufficiently large $n$ to a sufficient condition for connectedness of $T(A,D)$ that is only slightly less general, but significantly easier to check.  Let $F$ be the centered fundamental domain for $\Gamma_A$, and let $S_{AF}$ be the set of edge neighbours of the parallelepiped $AF$ in $\Gamma_A$.  That is, $S_{AF}$ consists of the neighbours $g$ in $\Gamma_A$ of the set $AF$ such that $AF \cap (AF+g)$ is not just a single point.  Note that, since $AF$ is a parallelepiped, $S_{AF}$ consists of $2m$ points,
\[ S_{AF} = \{ \pm g_1, \ldots, \pm g_{m} \}, \]
where if $\{ b_1, \ldots, b_m \}$ is a basis for $\Gamma_A$ consisting of neighbours of $F$, then we can set $g_i = Ab_i$ for each $i=1,\ldots,m$.  That is,
\[ S_{AF}^{+} := \{ g_1, \ldots, g_m \} \]
is a basis for $A(\Gamma_A)$.
\begin{figure}[h]
\begin{center}
\scalebox{0.6}{\includegraphics{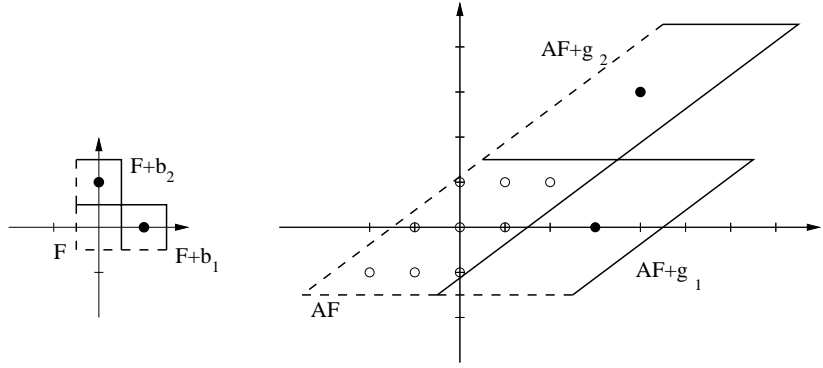}}
\end{center}
%\caption{The edge neighbours of $AF$ in the case ${A = \left[ \begin{array}{cc} 3 & 4 \\ 0 & 3 \end{array} \right]}$.  }\label{fig_edge-nbrs_AF}
\caption{The edge neighbours of $AF$.  Here $\Gamma_A = \mathbb{Z}^2$; $b_1 = (1,0)$, $b_2 = (0,1)$, $g_1 = Ab_1 = (3,0)$, and $g_2 = Ab_2 = (4,3)$.}\label{fig_edge-nbrs_AF}
\end{figure}

\begin{theorem}\label{thm_goodsuffcond}
Suppose that $\Gamma_A$ is $A$-invariant.  Let $D$ be the centered canonical digit set for $A$.  If $(AF \cup (g+AF)) \cap \Gamma_A$ is $\Gamma_A$-connected for each $g \in S_{AF}$, then $T(A,D)$ is connected.
\end{theorem}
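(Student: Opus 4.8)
The plan is to apply the Corollary following Lemma~\ref{lem_TnandDn}: it suffices to prove that every level set $D_n$ is $\Gamma_A$-connected, and I would do this by induction on $n$. The first task is to rewrite the hypothesis in terms of digits. Since $\Gamma_A$ is $A$-invariant we have $g = Ab_i \in A\Gamma_A \subseteq \Gamma_A$ for each $g \in S_{AF}$, so $(g + AF) \cap \Gamma_A = g + (AF \cap \Gamma_A)$, and Lemma~\ref{lem_DcapGamma} gives $AF \cap \Gamma_A = D$. Hence the hypothesis says precisely that $D \cup (D+g)$ is $\Gamma_A$-connected for every $g \in S_{AF}$, and more generally (translating by any $h \in \Gamma_A$) that $(D+h) \cup (D+h+g)$ is $\Gamma_A$-connected.

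For the inductive step I would use the recursive description of the level sets, $D_n = D + A D_{n-1} = \bigcup_{h \in A D_{n-1}} (D + h)$, which exhibits $D_n$ as a union of translated blocks indexed by $A D_{n-1}$. The key observation is that the linear map $A$ carries the lattice graph on $\Gamma_A$ with edges $\pm b_1, \ldots, \pm b_m$ isomorphically onto the graph on $A\Gamma_A$ with edges $\pm g_1, \ldots, \pm g_m$, i.e.\ with edge set $S_{AF}$, because $g_i = A b_i$. Thus if $D_{n-1}$ is $\Gamma_A$-connected (the induction hypothesis), then $A D_{n-1}$ is $S_{AF}^{+}$-connected: any two of its points are joined by a path whose consecutive differences lie in $S_{AF}$. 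I would then invoke the elementary gluing principle that if a family of nonempty sets is indexed by the vertices of a connected graph and the union of any two sets joined by an edge is connected, then the union of the whole family is connected (one chains together the pairwise-connected unions along a path, using that consecutive unions share a common block). Taking the blocks $D + h$ as the family and the $S_{AF}$-adjacencies of $A D_{n-1}$ as the edges, each such adjacency giving a connected union by the translated hypothesis, I conclude that $D_n$ is $\Gamma_A$-connected.

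The remaining ingredient, and the step I expect to be the main obstacle, is the base case $D_1 = D$: the induction above bottoms out in the assertion that the centered canonical digit set $D$ is \emph{itself} $\Gamma_A$-connected. This does not come for free from the gluing argument, since a single block has no edges to exploit, so it must be extracted from the hypothesis directly. The plan is to argue the contrapositive: if $D = AF \cap \Gamma_A$ were $\Gamma_A$-disconnected, I would locate a generator $g \in S_{AF}$ transverse to the separation and show that the corresponding doubled parallelepiped $AF \cup (g+AF)$ still has $\Gamma_A$-disconnected lattice points, so that $D \cup (D+g)$ fails to be $\Gamma_A$-connected, contradicting the hypothesis. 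Making this precise requires understanding how a disconnection of the lattice points of the parallelepiped $AF$ survives facet-translation by $g$; this is the genuinely geometric part of the proof, and the place where the convex parallelepiped structure of $AF$ (together with the fact that $S_{AF}^{+}$ is a basis of $A\Gamma_A$) must be used carefully.

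With the base case in hand, the induction gives that $D_n$ is $\Gamma_A$-connected for every $n \geq 1$, and $A$-invariance of $\Gamma_A$ guarantees $D_n \subset \Gamma_A$ throughout, so the hypotheses of the Corollary following Lemma~\ref{lem_TnandDn} are met. That Corollary, which rests on the Kirat--Lau limit result Lemma~\ref{lem_limitconnected}, then yields that $T(A,D)$ is connected, completing the proof.
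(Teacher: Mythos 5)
Your overall strategy is the same as the paper's: show by induction that every level set $D_n$ is $\Gamma_A$-connected and then invoke the corollary to Lemma~\ref{lem_TnandDn}. Your inductive step is essentially the paper's argument, and in one respect cleaner: you justify the $S_{AF}$-connectedness of $AD_{n-1}$ directly from the fact that $A$ carries the lattice graph on $\Gamma_A$ with edge set $\{\pm b_i\}$ isomorphically onto the graph on $A\Gamma_A$ with edge set $\{\pm g_i\} = S_{AF}$, whereas the paper reaches the same conclusion by a detour through the connectedness of $T_{n-1}$ and the identity $A^nT_{n-1} = AD_{n-1}+AF$. The gluing of the translated blocks $D+h$, $h \in AD_{n-1}$, along $S_{AF}$-adjacencies using the translated hypothesis is exactly the path construction written out in the paper's proof, and the identification $(AD_{n-1}+AF)\cap\Gamma_A = AD_{n-1}+D = D_n$ matches as well.

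The one genuine gap is the base case, which you have correctly diagnosed but not closed. The paper disposes of it in one line: since $D_1 = D = AF\cap\Gamma_A$ by Lemma~\ref{lem_DcapGamma}, it declares $D_1$ to be $\Gamma_A$-connected ``by hypothesis'' --- that is, it reads the $\Gamma_A$-connectedness of $AF\cap\Gamma_A$ itself as part of the hypothesis on the sets $(AF\cup(g+AF))\cap\Gamma_A$, and in the application in Section 3 this is precisely what is verified directly, via the cubes $C_{\mbox{inner}}$ and $C_{\mbox{outer}}$. You are right that, as a matter of pure logic, connectedness of each union $D\cup(D+g)$ does not imply connectedness of $D$, since a subset of a connected set in the lattice graph need not be connected; but your proposed repair --- a contrapositive argument locating a generator $g$ ``transverse to the separation'' and showing the disconnection survives in $D\cup(D+g)$ --- is only a sketch, and it is not clear it can be carried out: a disconnection of $D$ could a priori be bridged through the translate $D+g$ for every $g\in S_{AF}$. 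As written, your induction therefore has no established base. The fix that matches both the paper's usage and its application is to take the $\Gamma_A$-connectedness of $D = AF\cap\Gamma_A$ as part of the hypothesis (or to verify it directly for the matrices at hand), rather than to attempt to derive it from the connectedness of the doubled parallelepipeds.
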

\begin{figure}[h]
\begin{center}
% $A = \left[ \begin{array}{cc} 3 & 1 \\ 0 & 3 \end{array} \right]$
\subfigure[][The hypotheses are satisfied.]{\resizebox{!}{1.3in}{\includegraphics{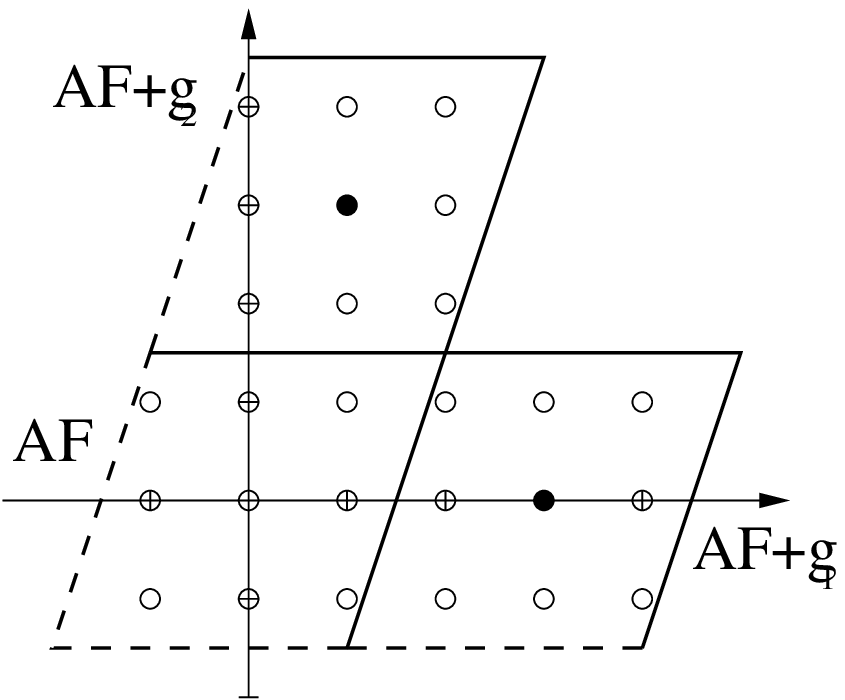}}}
% $A = \left[ \begin{array}{cc} 3 & 7 \\ 0 & 3 \and{array} \right]
\subfigure[][The hypotheses are \emph{not} satisfied.]{\resizebox{!}{1.3in}{\includegraphics{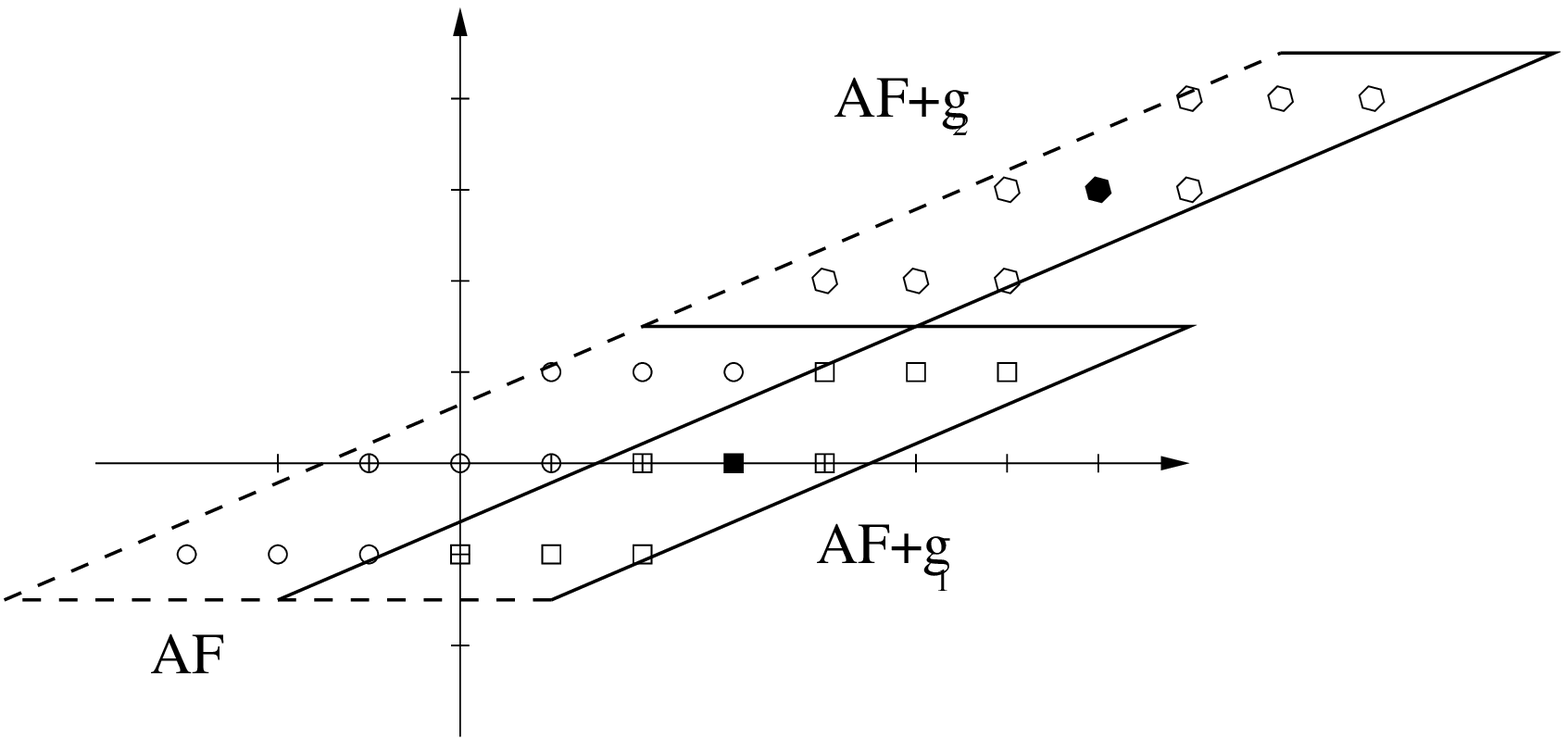}}}
\end{center}
\caption{Examples illustrating the hypothesis of Theorem~\ref{thm_goodsuffcond}.  In case (b), $(AF + (g_2 + AF)) \cap \mathbb{Z}^2$ is lattice-disconnected.}\label{fig_hypothesis}
\end{figure}
\begin{proof}
Suppose that $(AF \cup (g + S_{AF})) \cap \Gamma_A$ is $\Gamma_A$-connected for each $g \in S_{AF}$.  We will show inductively that $D_n$ is $\Gamma_A$-connected for each $n \geq 1$.  For the base case, note that $D_1 = D = AF \cap \Gamma_A$ by Lemma~\ref{lem_DcapGamma}.  Thus, by hypothesis, $D_1$ is $\Gamma_A$-connected.

For the inductive step, suppose that $D_{n-1}$ is $\Gamma_A$-connected.  Then $T_{n-1}$ is connected (with no finite cut set).  Recall from the proof of Lemma~\ref{lem_TnandDn} that $A^{n-1}T_{n-1} = D_{n-1} + F$.  Thus
\[ A^n T_{n-1} = AD_{n-1} + AF \]
is connected (with no finite cut sets).  Since $AF$ is a centered fundamental domain for $A(\Gamma_A)$, this implies that $AD_{n-1}$ is $A(\Gamma_{A})$-connected, equivalently, that $AD_{n-1}$ is $S_{AF}$-connected.  Since $AF + g$ is $\Gamma_A$-connected to $AF$ for each $g \in S_{AF}$ by hypothesis, we may thus conclude that $(AD_{n-1} + AF) \cap \Gamma_A$ is $\Gamma_A$-connected.  

Specifically, to find a connected path in $\Gamma_A$ between any two points $Ak_1 + d_{k_1}$ and $Ak_2 + d_{k_2}$ in $(AD_{n-1}+AF) \cap \Gamma_A$, we first find the $S_{AF}$-connected path $g_0 = Ak_1, \ldots, g_{r-1} = Ak_2$ between $k_1$ and $k_2$.  For each step $g_i$, we find the $\Gamma_A$-connected path from the origin $\mathbf{0}$ to $g_i$ (guaranteed to exist by the hypothesis that $(AF + (g+AF)) \cap \Gamma_A$ is $\Gamma_A$-connected for each $g \in S_{AF}$); denote this path by $d^{(i)}_1 = \mathbf{0}, \ldots, d^{(i)}_{s_i} = g_i$.  Set $\ell_{i+j} = d_{k_1} + g_i + d^{(i)}_j$ for $i = 0, \ldots, r-1$ and $j = 1, \ldots s_i$.  Then $\{ \ell_{i+j} \}$ forms a $\Gamma_A$-connected path from $Ak_1 + d_{k_1}$ to $Ak_2 + d_{k_2}$.  By hypothesis, $D$ is $\Gamma_A$-connected, so we append to this the $\Gamma_A$-connected path from $d_{k_1}$ to $d_{k_2}$ translated by $Ak_2$.  This gives the required path in $\Gamma_A$ connecting our two points.

Note also that
\[ (AD_{n-1} + AF) \cap \Gamma_A = AD_{n-1} + D = D_{n}. \]
Therefore we have shown that $D_n$ is $\Gamma_A$-connected, implying that $T_n$ is connected (with no finite cut sets).
\end{proof}

The most obvious (perhaps only) examples of dilation matrices $A$ that satisfy the hypotheses of Theorem~\ref{thm_goodsuffcond} are matrices that do not have too large a skew component, as illustrated in Figure~\ref{fig_hypothesis}.  If the eigenvalues of $A$ are of sufficiently large magnitude, so that all singular values $\sigma$ of $A$ satisfy $\sigma > 2$, then $D$ contains the standard basis for $\mathbb{Z}^m$, and thus $\Gamma_A = \mathbb{Z}^m$.

\section{Centered Canonical Digit Sets for $J$}

Let $A$ be a dilation matrix with rational eigenvalues.  Note that since $A \in M_{m}(\mathbb{Z})$, the eigenvalues are \emph{algebraic integers}, that is, roots of a monic polynomial with integer coefficients (the characteristic polynomial for $A$).  Algebraic integers have been studied extensively in connection with other algebraic and number theoretic questions.  For our purposes, the important result to note is that, if we denote the set of all algebraic integers by $\mathcal{A}$, then $\mathcal{A} \cap \mathbb{Q} = \mathbb{Z}$ (see, for example, Theorem 6.1.1 of \cite{ir1982}).  Thus we are considering dilation matrices $A$ with eigenvalues $\{ \lambda_1, \ldots, \lambda_r\}$ (for some $1 \leq r \leq m$), with $\lambda_i \in \mathbb{Z}$ for $i = 1, \ldots, r$.  Then the Jordan form $J = \mbox{diag}{\{ J_1, \ldots, J_r \}}$ is in $M_{m}(\mathbb{Z})$ as well.

Let $m_i$ be the size of the Jordan block corresponding to the eigenvalue $\lambda_i$, and $J_i \in M_{m_i}(\mathbb{Z})$ be the Jordan block corresponding to $\lambda_i$.  We will consider each Jordan block separately.  Set
\[ F_i := {\left( -\frac{1}{2},\frac{1}{2} \right]}^{m_i}. \]
Note that the hypotheses $\abs{\lambda_i} >1$ and $\lambda_i \in \mathbb{Z}$ together imply that $\abs{\lambda_i} \geq 2$.  Thus $J_iF_i$ is a parallelepiped in $\mathbb{R}^{m_i}$ with corners
\[ J_i {\left[ \begin{array}{c} \frac{\epsilon_1}{2} \\ \frac{\epsilon_2}{2} \\ \vdots \\ \frac{\epsilon_{m_i - 1}}{2} \\ \frac{\epsilon_{m_i}}{2} \end{array} \right]} = \frac{1}{2} {\left[ \begin{array}{c} \lambda_i \epsilon_1 + \epsilon_2 \\ \lambda_i \epsilon_2 + \epsilon_3 \\ \vdots \\ \lambda_i \epsilon_{m_i - 1} + \epsilon_{m_i} \\ \lambda_i \epsilon_{m_i} \end{array} \right]}, \]
where $\epsilon_j = \pm 1$ for $j = 1, \ldots, m_i$, and $J_iF_i$ is the convex hull of this set of points, excluding the faces where $\epsilon_j = -1$ for any $j$.  Examples with $\lambda_i = 2$ in dimensions $1$, $2$, and $3$ are shown in Figure~\ref{fig_JiFieven}.  Examples with $\lambda_i = 3$ in dimensions $1$, $2$, and $3$ are shown in Figure~\ref{fig_JiFiodd}.
\begin{figure}[h]
\begin{center}
\subfigure[][]{\resizebox{1.1in}{!}{\includegraphics{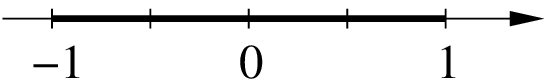}}}
\subfigure[][]{\resizebox{1.6in}{!}{\includegraphics{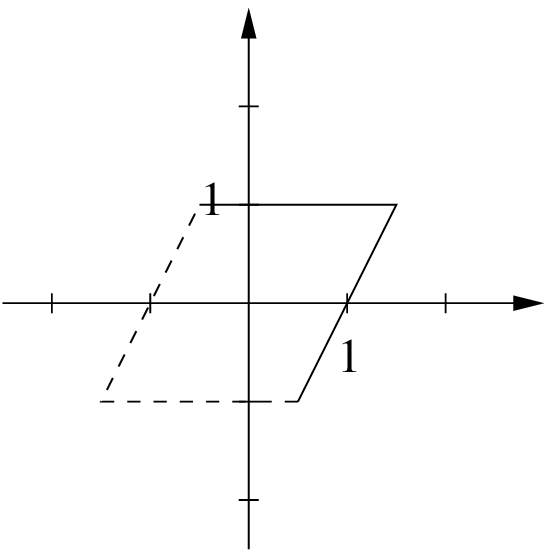}}}
\subfigure[][]{\resizebox{2.1in}{!}{\includegraphics{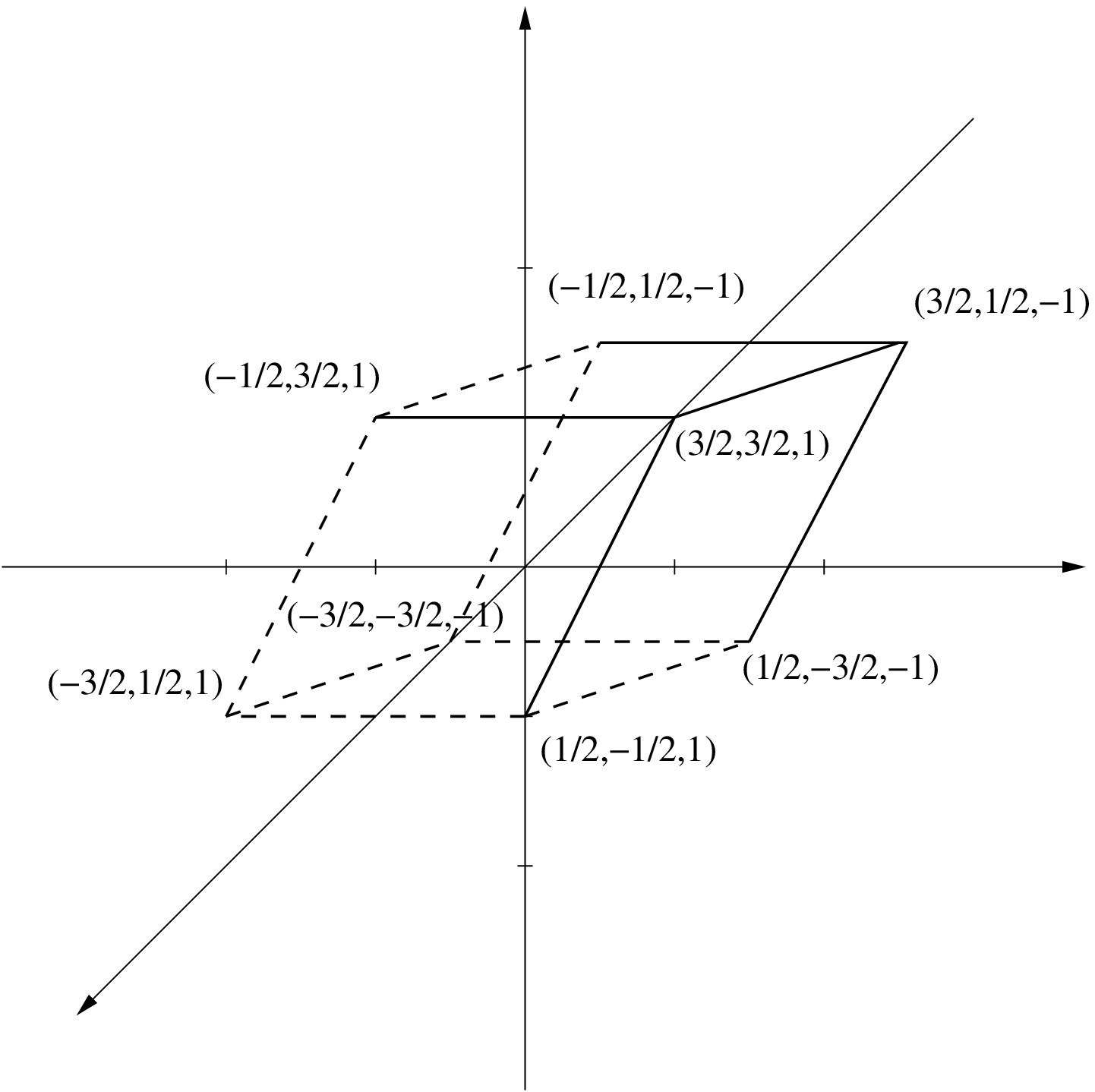}}}
\caption{The parallelepiped $J_iF_i$ for $\lambda_i = 2$ in dimensions (a) $m_i = 1$, (b) $m_i = 2$, and (c) $m_i = 3$.}\label{fig_JiFieven}
\end{center}
\end{figure}
\begin{figure}[h]
\begin{center}
\subfigure[][]{\resizebox{1.1in}{!}{\includegraphics{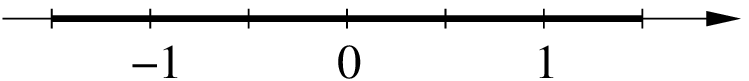}}}
\subfigure[][]{\resizebox{1.6in}{!}{\includegraphics{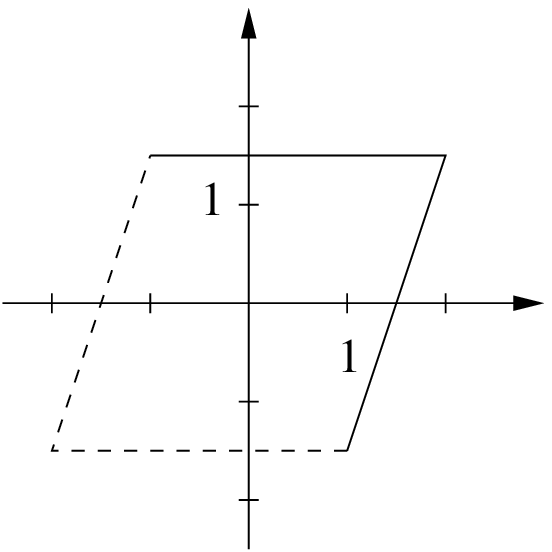}}}
\subfigure[][]{\resizebox{2.1in}{!}{\includegraphics{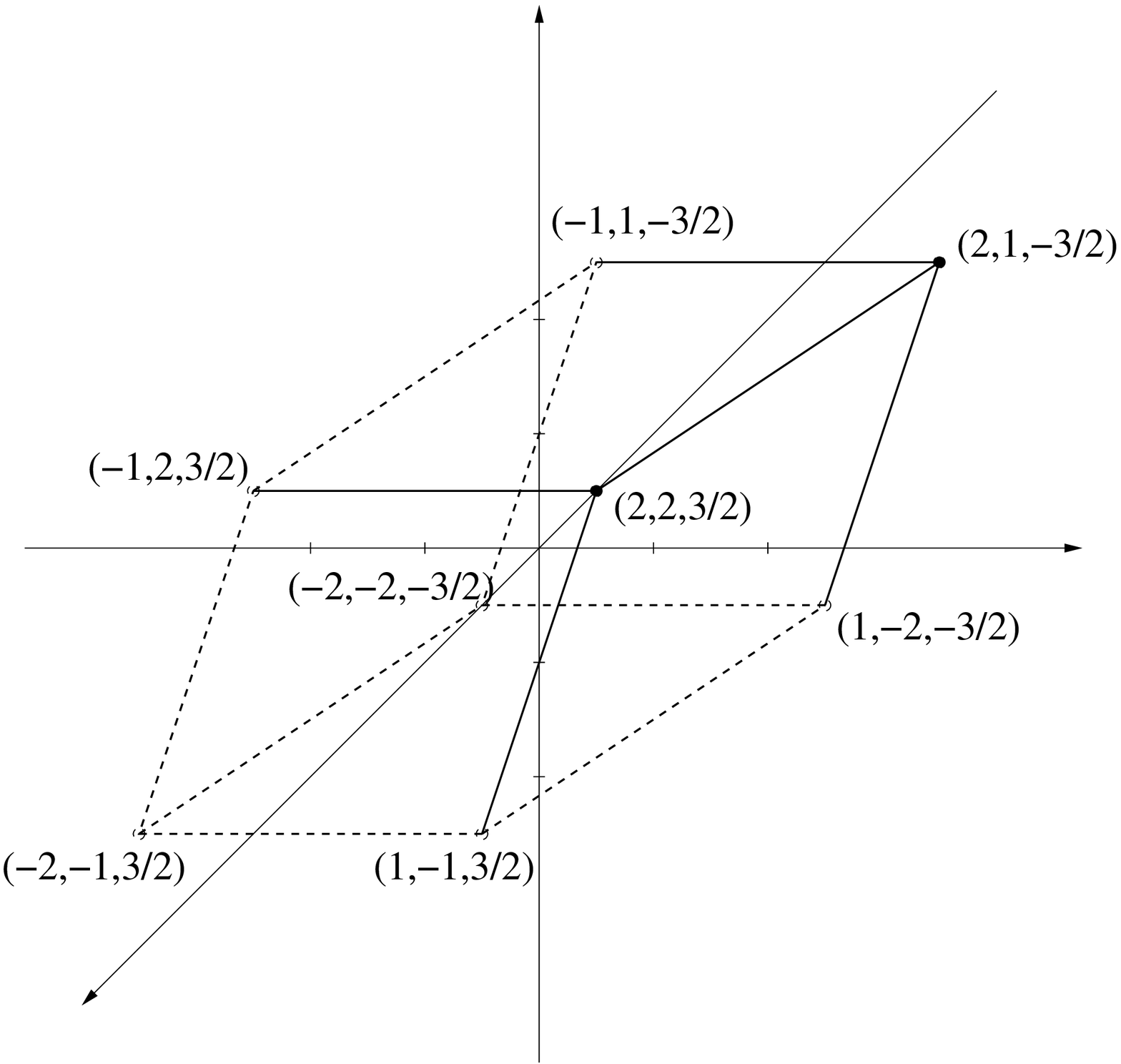}}}
\caption{The parallelepiped $J_iF_i$ for $\lambda_i = 3$ in dimensions (a) $m_i = 1$, (b) $m_i = 2$, and (c) $m_i = 3$.}\label{fig_JiFiodd}
\end{center}
\end{figure}

For any $\lambda_i$ and $m_i$, we note that $J_iF_i$ is contained in the cube
\[ C_{\mbox{outer}} := {\left( -\frac{\lambda_i}{2} - \frac{1}{2}, \frac{\lambda_i}{2} + \frac{1}{2} \right]}^{m_i - 1} \times {\left( -\frac{\lambda_i}{2}, \frac{\lambda_i}{2} \right]}, \]
and $J_iF_i$ contains the cube
\[ C_{\mbox{inner}} := {\left( -\frac{\lambda_i}{2} + \frac{1}{2}, \frac{\lambda_i}{2} - \frac{1}{2} \right]}^{m_i - 1} \times {\left( -\frac{\lambda_i}{2}, \frac{\lambda_i}{2} \right]}. \]
Thus
\[ C_{\mbox{inner}} \bigcap \mathbb{Z}^{m_i} \subset J_iF_i \bigcap \mathbb{Z}^{m_i} \subset C_{\mbox{outer}} \bigcap \mathbb{Z}^{m_i}. \]
Being a cube, $C_{\mbox{inner}} \cap \mathbb{Z}^{m_i}$ is lattice-connected.  Observe that the set of points \mbox{$(C_{\mbox{outer}} \cap \mathbb{Z}^{m_i}) \backslash (C_{\mbox{inner}} \cap \mathbb{Z}^{m_i})$} form a one-point-wide shell around $C_{\mbox{inner}} \cap \mathbb{Z}^{m_i}$ in all except the $m_i^{\mbox{th}}$-dimension.  Thus each point in this shell is lattice-adjacent to $C_{\mbox{inner}} \cap \mathbb{Z}^{m_i}$ with the possible exception of the corners, or extremal points.  That is, if $C_{\mbox{outer}}$ was truly the $m_i$-dimensional cube one unit larger than $C_{\mbox{inner}}$ in each direction, the corners would not be lattice-adjacent to $C_{\mbox{inner}}$, but would instead be at distance $2$ in the taxicab metric (see Figure~\ref{fig_NotOurCube}).
\begin{figure}[h]
\begin{center}
\resizebox{1.5in}{!}{\includegraphics{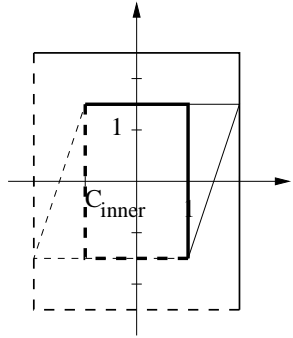}}
\caption{A cube around $C_{\mbox{inner}}$ extending one unit in each direction - \emph{not} $C_{\mbox{outer}}$!}\label{fig_NotOurCube}
\end{center}
\end{figure}
Instead, $C_{\mbox{outer}}$ coincides with $C_{\mbox{inner}}$ in the $m_i^{\mbox{th}}$-dimension, as shown in Figure~\ref{fig_IsOurCube}.
\begin{figure}[h]
\begin{center}
\subfigure[][{$\lambda_i = 2$}]{\resizebox{2in}{!}{\includegraphics{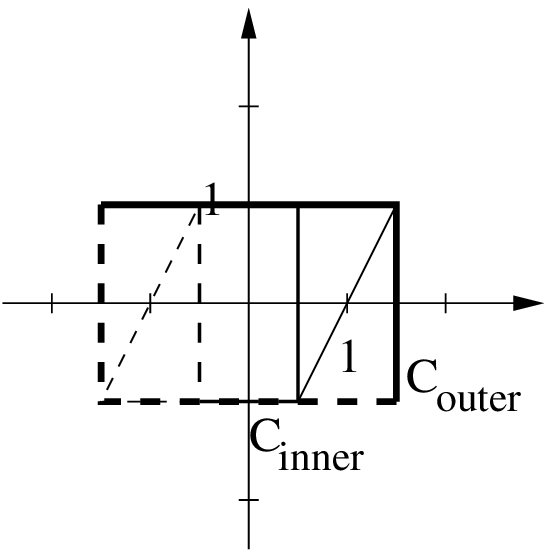}}}
\subfigure[][{$\lambda_i = 3$}]{\resizebox{2in}{!}{\includegraphics{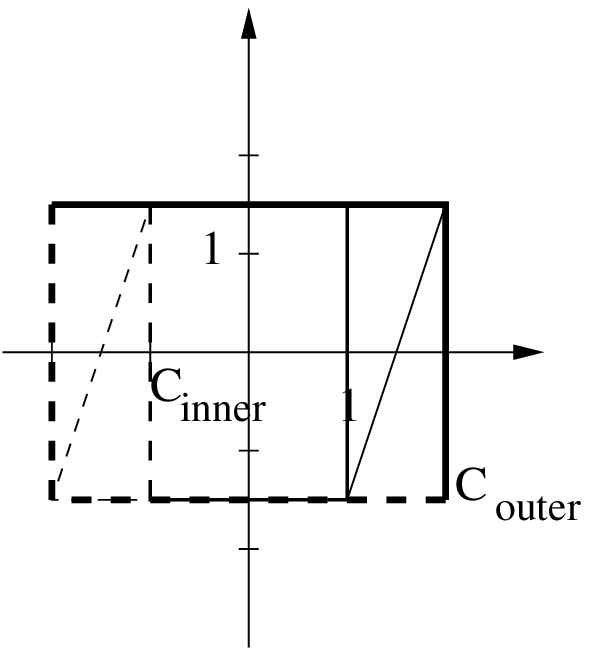}}}
\caption{$C_{\mbox{outer}}$ and $C_{\mbox{inner}}$ in dimension $m_i = 2$.}\label{fig_IsOurCube}
\end{center}
\end{figure}
The extremal points of $C_{\mbox{outer}}$ are thus of the form
\[ {\left[ \begin{array}{c} \epsilon_1 \frac{\lambda_i}{2} + \epsilon_2 \frac{1}{2} \\ \epsilon_2 \frac{\lambda_i}{2} + \epsilon_3 \frac{1}{2} \\ \vdots \\ \epsilon_{m_i - 1} \frac{\lambda_i}{2} + \epsilon_{m_i} \frac{1}{2} \\ \epsilon_{m_i} \frac{\lambda_i}{2} \end{array} \right]}, \quad \epsilon_1, \ldots, \epsilon_{m_i} = \pm 1. \]
If $\lambda_i$ is odd, the last coordinate will not be an integer.  If $\lambda_i$ is even, only the last coordinate will be an integer.  In both cases, the extremal points are not integer lattice points, and in fact $(C_{\mbox{outer}} \cap \mathbb{Z}^{m_i}) \backslash (C_{\mbox{inner}} \cap \mathbb{Z}^{m_i})$ consists only of points that are lattice-adjacent to \mbox{$C_{\mbox{inner}} \cap \mathbb{Z}^{m_i}$}.  $J_iF_i \cap \mathbb{Z}^{m_i}$ will include some points of this shell and not others, which will vary depending on whether $\lambda_i$ is even or odd; however in all cases we see that $J_iF_i \cap \mathbb{Z}^{m_i}$ is lattice-connected.

Set $D_{J_i} := J_iF_i \cap \mathbb{Z}^{m_i}$.  Note that $\Gamma_{J_i}$ contains a basis for $\mathbb{Z}^{m_i}$, thus $\Gamma_{J_i} = \mathbb{Z}^{m_i}$, and is thus a $J_i$-invariant lattice.  Applying the observation above together with Theorem~\ref{thm_goodsuffcond}), we obtain the following.
\begin{proposition} $T(J_i,D_{J_i})$ is connected.
\end{proposition}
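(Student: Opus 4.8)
The plan is to apply Theorem~\ref{thm_goodsuffcond} directly to the pair $(J_i, D_{J_i})$ with lattice of translations $\Gamma_{J_i} = \mathbb{Z}^{m_i}$. Two of the three hypotheses are already in hand: the discussion preceding the statement observes that $\Gamma_{J_i} = \mathbb{Z}^{m_i}$ and that this lattice is $J_i$-invariant, while $D_{J_i} = J_iF_i \cap \mathbb{Z}^{m_i}$ is by construction the centered canonical digit set for $J_i$. It therefore remains only to verify the edge-neighbour connectivity hypothesis, namely that $(J_iF_i \cup (g + J_iF_i)) \cap \mathbb{Z}^{m_i}$ is $\mathbb{Z}^{m_i}$-connected for each $g \in S_{J_iF_i}$.

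First I would identify the edge neighbours explicitly. Since $F_i$ is the centered fundamental domain for $\mathbb{Z}^{m_i}$, its neighbour basis is the standard basis $\{ e_1, \ldots, e_{m_i} \}$, so $S_{J_iF_i}^{+} = \{ g_1, \ldots, g_{m_i} \}$ with $g_j = J_i e_j$. Reading off the columns of $J_i$ gives $g_1 = \lambda_i e_1$ and $g_j = e_{j-1} + \lambda_i e_j$ for $2 \leq j \leq m_i$, together with their negatives. For each such $g$, the set $g + (J_iF_i \cap \mathbb{Z}^{m_i})$ is a translate of $J_iF_i \cap \mathbb{Z}^{m_i}$ by a vector of $\mathbb{Z}^{m_i}$, hence is itself lattice-connected by the result established just above the statement. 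Consequently, to show the union is lattice-connected it suffices to produce a single bridging edge: a point $p \in J_iF_i \cap \mathbb{Z}^{m_i}$ and a standard basis direction $e_k$ with $p + e_k \in g + (J_iF_i \cap \mathbb{Z}^{m_i})$ (or $p - e_k$ in the set).

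The bridge itself I would locate using the sandwich $C_{\mbox{inner}} \cap \mathbb{Z}^{m_i} \subset J_iF_i \cap \mathbb{Z}^{m_i} \subset C_{\mbox{outer}} \cap \mathbb{Z}^{m_i}$ already set up in this section, examining how the parallelepiped $J_iF_i$ and its translate $g + J_iF_i$ meet along their shared $(m_i - 1)$-dimensional face. For the pure-dilation neighbour $g_1 = \lambda_i e_1$, I would exhibit integer points of the two copies lying just inside their respective inner cubes that differ by a single standard basis vector. For the skew neighbours $g_j = e_{j-1} + \lambda_i e_j$, the shared face is sheared, and I would again use the one-point-wide shell $(C_{\mbox{outer}} \cap \mathbb{Z}^{m_i}) \backslash (C_{\mbox{inner}} \cap \mathbb{Z}^{m_i})$ to produce the bridging lattice-adjacent pair. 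Once the hypothesis of Theorem~\ref{thm_goodsuffcond} is verified for every $g \in S_{J_iF_i}$, the theorem yields connectedness of $T(J_i, D_{J_i})$.

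The hard part will be the bookkeeping for the skew neighbours together with the split between even and odd $\lambda_i$: exactly which shell points of $J_iF_i$ are retained depends on the parity of $\lambda_i$ (as the earlier analysis of the extremal points shows), so confirming that a retained shell point of one copy is lattice-adjacent to a retained point of the translated copy requires the same careful parity argument used to prove $J_iF_i \cap \mathbb{Z}^{m_i}$ connected. I expect the pure-dilation direction to be routine and essentially all the genuine work to sit in checking that the bridging edges survive in the mixed-coordinate, odd-$\lambda_i$ cases.
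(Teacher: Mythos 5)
Your proposal follows exactly the paper's route: apply Theorem~\ref{thm_goodsuffcond} to $(J_i, D_{J_i})$ with the $J_i$-invariant lattice $\Gamma_{J_i} = \mathbb{Z}^{m_i}$, using the sandwich $C_{\mbox{inner}} \cap \mathbb{Z}^{m_i} \subset J_iF_i \cap \mathbb{Z}^{m_i} \subset C_{\mbox{outer}} \cap \mathbb{Z}^{m_i}$ and the shell/parity analysis of the extremal points to obtain lattice-connectedness. You are in fact somewhat more explicit than the paper, which only verifies that $J_iF_i \cap \mathbb{Z}^{m_i}$ itself is lattice-connected before invoking the theorem, whereas you correctly flag that the hypothesis also requires $(J_iF_i \cup (g+J_iF_i)) \cap \mathbb{Z}^{m_i}$ to be connected for each edge neighbour $g$ and sketch the bridging-edge argument needed to check it; that remaining bookkeeping is left unexecuted in your write-up, but it is the same kind of analysis and is glossed over by the paper as well.
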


Since $J = \oplus_{i} J_i$, $D_{J} := \prod_{i} D_{J_i}$ is a centered canonical digit set for $J$, and $T(J,D_J) = \prod_i T(J_i,D_{J_i})$.  Thus we conclude:
\begin{corollary} $T(J,D_J)$ is connected.
\end{corollary}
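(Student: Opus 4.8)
The plan is to reduce connectedness of $T(J,D_J)$ to connectedness of each factor $T(J_i,D_{J_i})$, which the preceding Proposition already supplies, and then to invoke the standard topological fact that a finite product of connected spaces is connected. Since the sentence preceding the statement already asserts the product decomposition, the real content of the proof is to justify that decomposition carefully and then apply the product fact.

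First I would make the product structure precise. Because $J = \mathrm{diag}\{J_1,\ldots,J_r\}$ is block diagonal, the ambient space splits as $\mathbb{R}^m = \prod_i \mathbb{R}^{m_i}$, and the centered fundamental domain factors as $F = \prod_i F_i$ with $F_i = \left(-\tfrac12,\tfrac12\right]^{m_i}$. Since $J$ acts blockwise, $JF = \prod_i J_iF_i$, and intersecting with $\mathbb{Z}^m = \prod_i \mathbb{Z}^{m_i}$ yields
\[ D_J = JF \cap \mathbb{Z}^m = \prod_i (J_iF_i \cap \mathbb{Z}^{m_i}) = \prod_i D_{J_i}, \]
confirming both that $D_J$ is the centered canonical digit set for $J$ and that it factors as the product of the blockwise digit sets.

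Next I would verify $T(J,D_J) = \prod_i T(J_i,D_{J_i})$ directly from the series representation $T(J,D_J) = \{ \sum_{j=1}^\infty J^{-j} d_j : d_j \in D_J \}$. As $J^{-j} = \mathrm{diag}\{J_1^{-j},\ldots,J_r^{-j}\}$ and each digit decomposes uniquely as $d_j = (d_j^{(1)},\ldots,d_j^{(r)})$ with $d_j^{(i)} \in D_{J_i}$, the $i$-th coordinate block of $\sum_j J^{-j} d_j$ is exactly $\sum_j J_i^{-j} d_j^{(i)} \in T(J_i,D_{J_i})$, and conversely the coordinate blocks may be chosen independently. This gives the claimed product decomposition. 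Finally, since each $T(J_i,D_{J_i})$ is connected by the Proposition and a finite product of connected topological spaces is connected, I conclude that $T(J,D_J)$ is connected.

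The proof is genuinely short, so there is no serious obstacle; the only step demanding care is the bookkeeping in the middle. One must check that the direct-sum structure of $J$ is inherited by every power $J^{-j}$ and that the infinite series respects the splitting coordinatewise, so that the digit choices in distinct blocks are truly independent. Convergence of each blockwise series is inherited from convergence of the full series, since $J$ and hence each $J_i$ is expanding; with that observation the factorization is exact and the corollary follows.
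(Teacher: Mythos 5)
Your proof is correct and follows essentially the same route as the paper, which justifies the corollary in a single sentence by noting that $D_J = \prod_i D_{J_i}$, that $T(J,D_J) = \prod_i T(J_i,D_{J_i})$, and (implicitly) that a finite product of connected sets is connected. Your write-up simply makes explicit the blockwise bookkeeping that the paper leaves to the reader.
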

Also, taking $P \in M_m(\mathbb{Z})$ and setting $D_A := PD_J$, so that
$T(A,D_A) = PT(J,D_J)$:
\begin{corollary} Let $A$ be any dilation matrix with only rational eigenvalues.
There exists a digit set $D_A$ for $A$ such that $T(A,D_A)$ is connected.
\end{corollary}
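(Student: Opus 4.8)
The plan is to assemble pieces already in hand: connectedness is invariant under the similarity that relates $A$ to its Jordan form $J$, and the centered canonical digit set for $J$ is known to yield a connected attractor. Given an arbitrary dilation matrix $A$ with rational eigenvalues, I would build an explicit digit set $D_A$ by pulling the centered canonical digit set for $J$ back through an integer similarity matrix, and then transfer connectedness from $T(J,D_J)$ to $T(A,D_A)$.

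First I would reduce to integer data. Since $A \in M_m(\mathbb{Z})$ has rational eigenvalues, and a rational algebraic integer is an integer, every eigenvalue $\lambda_i$ lies in $\mathbb{Z}$ with $\abs{\lambda_i} \geq 2$; hence the Jordan form $J = \mathrm{diag}\{J_1, \ldots, J_r\}$ lies in $M_m(\mathbb{Z})$. Second, I would fix the similarity matrix: beginning from the determinant-one matrix $P_1$ with $A = P_1 J P_1^{-1}$, take $P = \min\{rP_1 : r > 0,\ rP_1 \in M_m(\mathbb{Z})\}$ as in the Methods section, so that $P \in M_m(\mathbb{Z})$ and $A = PJP^{-1}$. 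Third, I would let $D_J$ be the centered canonical digit set for $J$ and set $D_A := PD_J$; since $P$ is an invertible integer matrix, $D_A$ is a basic digit set for $A$ contained in $\mathbb{Z}^m$.

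The connectedness claim then follows from two earlier results. The preceding Corollary gives that $T(J,D_J)$ is connected; indeed $D_J = \prod_i D_{J_i}$ decomposes over Jordan blocks, each $T(J_i, D_{J_i})$ is connected by the block-wise Proposition, and so $T(J,D_J) = \prod_i T(J_i, D_{J_i})$ is connected as a finite product of connected sets. Because $T(A,D_A) = P\bigl(T(J,D_J)\bigr)$ with $P$ a homeomorphism of $\mathbb{R}^m$, the Lemma relating the two attractors shows that $T(A,D_A)$ is connected if and only if $T(J,D_J)$ is, which completes the argument.

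The substantive work is all upstream: the lattice-connectedness of each $D_{J_i} = J_iF_i \cap \mathbb{Z}^{m_i}$ and the verification of the hypotheses of Theorem~\ref{thm_goodsuffcond} for the individual blocks. At the level of this final corollary, the only genuine subtlety is the passage from $J$ to $A$ through a non-unimodular integer matrix: one must confirm that rescaling $P_1$ to an integer matrix $P$ still produces a \emph{bona fide} basic digit set $D_A = PD_J$, namely a complete and non-redundant set of coset representatives for $\mathbb{Z}^m/A(\mathbb{Z}^m)$ rather than merely a collection of $\abs{\det A}$ integer points, and that the resulting tile---though no longer of unit measure---remains homeomorphic to $T(J,D_J)$. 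Both points are settled in the Methods section, so here they may be cited directly, leaving the corollary as a routine synthesis.
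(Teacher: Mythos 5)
Your proposal is correct and follows essentially the same route as the paper: reduce to the integer Jordan form $J$ via the fact that rational algebraic integers are integers, take the minimal integer rescaling $P$ of the similarity matrix, set $D_A = PD_J$, invoke the block-wise connectedness of $T(J,D_J)$, and transfer connectedness through the homeomorphism $P$ using the first Lemma. The subtlety you flag about $PD_J$ remaining a basic digit set under a non-unimodular $P$ is handled in the paper exactly as you propose, by citation to the Methods section.
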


%\affiliationone{% in this example, two authors share an institution
% Avra S.~Laarakker and Eva Curry\\
% Department of Mathematics and Statistics,\\
%Huggins Science Hall,\\
%Acadia University, \\
%Wolfville, Nova Scotia, \\
%B4P 2R6\\
%Canada
%   \email{eva.curry@acadiau.ca}}
% Important: Do not put any empty line here.

\begin{thebibliography}{9999999}

\bibitem{br1993}
{\bibname M.F.~Barnsley \and H.~Rising,}
\newblock \textit{Fractals Everywhere}
\newblock {(Academic Press Professional, Boston}, 1993).

%\bibitem{me2005}
%Curry, E.
%\newblock Radix and Pseudodigit Representations in $\mathbb{Z}^n$.
%\newblock preprint.

\bibitem{me2006}
{\bibname E.~Curry,}
\newblock `Radix Representations, Self-Affine Tiles, and Multivariable Wavelets',
\newblock \emph{Proc.\ AMS} {134} (2006), 2411-2418.

\bibitem{gm1992}
{\bibname K.~Gr\"{o}chenig \and W.~Madych,}
\newblock `Multiresolution analysis, Haar bases, and self-similar tilings',
\newblock \emph{IEEE Trans.\ Inform.\ Thy.\ }{38} (1992), 556-568.

\bibitem{hj1990}
{\bibname Roger A.~Horn \and Charles R.~Johnson,}
\newblock \textit{Matrix Analysis}
\newblock (Cambridge University Press, Cambridge, 1990).

\bibitem{ir1982}
{\bibname K.~Ireland \and M.~Rosen,}
\newblock \textit{A Classical Introduction to Modern Number Theory}
\newblock {(Springer-Verlag}, New York, 1982).

\bibitem{kl2000}
{\bibname I.~Kirat \and K.-S.J.~Lau,}
\newblock `On the connectedness of self-affine tiles',
\newblock \emph{J.\ London Math.\ Soc.\ }(2) {62} (2000), 291-304.

\bibitem{l2009}
{\bibname A.S.~Laarakker,}
\newblock `Topological Properties of Tiles and Digit Sets',	
\newblock Masters thesis, Acadia University, 2009.
%\newblock {http://openarchive.acadiau.ca/cdm4/item_viewer.php?CISOROOT=/Theses\&CISOPTR=526\&CISOBOX=1\&REC=8}

%\bibitem{lw1996}
%Lagarias, J.\ and Wang, Y.
%\newblock Self-affine tiles in $\mathbb{R}^n$.
%\newblock \emph{Adv.\ Math.\ } {121} (1996), 21-49.////////

%\bibitem{lw1996-2}
%Lagarias, J.\ and Wang, Y.
%\newblock Integral self-affine tiles in $\mathbb{R}^n$. I, Standard and Nonstandard Digit Sets
%\newblock \emph{J.\ London Math.\ Soc.\ } {54} (1996), 161-179.

\bibitem{lw1997}
{\bibname J.~Lagarias \and Y.~Wang,}
\newblock `Integral Self-Affine Tiles in $\mathbb{R}^n$ Part II: Lattice Tilings',
\newblock \emph{J.\ Fourier Anal.\ and Appl.\ }{3} (1997), 83-102.

%\bibitem{lw1996-3}
%Lagarias, J.\ and Wang, Y.
%\newblock Haar Bases for $L^2(\mathbb{R}^n)$ and Algebraic Number Theory.
%\newblock \emph{J.\ Number Thy.\ } {57} (1996), 181-197.

%\bibitem{lw1999}
%Lagarias, J.\ and Wang, Y.
%\newblock Corrigendum/Addendum: Haar Bases for $L^2(\mathbb{R}^n)$ and Algebraic Number Theory.
%\newblock \emph{J.\ Number Thy.\ } {76} (1999), 330-336.

\bibitem{m1982}
{\bibname D.W.~Matula,}
\newblock `Basic Digit Sets for Radix Representation',
\newblock \emph{J.\ ACM} (4) {29} (1982), 1131-1143.

\bibitem{st2002}
{\bibname K.~Scheicher \and J.M.~Thuswaldner},
\newblock `Neighbours of self-affine tiles in lattice tilings',
\newblock  \textit{Proceedings of the Conference Fractals in Graz} (eds P.~Grabner and W.~Woess), {Trends in Mathematics} (Birkhauser, Basel, 2002), pp.~241-262.





%\bibitem{b1991}
%    Bandt, C.
%    \newblock Self-similar sets.
%    \newblock \emph{Proc.\ AMS} {112}, (1991), 549-562.

%\bibitem{kf1997}
%     Falconer, K.J.
%     \newblock Techniques in Fractal Geometry.
%     \newblock \emph{John Wiley and Sons, Chichester} (1997).
  
%\bibitem{gh1994}
%    Gr\"{o}chenig, K.\ and Haas, A.
%    \newblock Self-similar lattice tilings.
%    \newblock \emph{J.\ Fourier Anal.\ Appl.\ } {1} (1994), 131-170.

% \bibitem{w1998}
%    Wang, Y.
%    \newblock Self-affine tiles. 
%    \newblock \emph{K.~S.~Lau, ed., Advances in Wavelet Analysis. Springer, New York}, (1998), 261-285.

%\bibitem[S{\etalchar{+}}09]{sage}
%   W.~A.~Stein et~al., 
%   \newblock \emph{{S}age {M}athematics {S}oftware ({V}ersion 3.3)},
%   \newblock The Sage~Development Team (specific package authors include: Robert Bradshaw, Emily Kirkman, Robert Miller, Networkx, William Stein), 2009, {\texttt{http://www.sagemath.org}}.



\end{thebibliography}
\end{document}